\newtheorem{thm}{Theorem}[section]
\newtheorem{Lemma}[thm]{Lemma}
\newtheorem{cor}[thm]{Corollary}
\long\def\@makecaption#1#2{%
 \vskip\abovecaptionskip
  \sbox\@tempboxa{{#1.}\quad #2}%
   \ifdim \wd\@tempboxa >\hsize
    { #1.}\quad #2\par
     \else
  \global \@minipagefalse
   \hb@xt@\hsize{\hfil\box\@tempboxa\hfil}%
   \fi
   \vskip\belowcaptionskip}
\title{\vspace{0cm}{Matching preclusion for vertex-transitive networks\thanks{This
 work is supported by NSFC (nos. 11371180 and 11401279), SRFDP (no. 20130211120008) and the Fundamental Research Funds for the Central
Universities (no. lzujbky-2014-21).}}}
\author{Qiuli Li, Jinghua He and Heping Zhang\footnote{The
corresponding author.} \\
\small{School of Mathematics and Statistics, Lanzhou University,
Lanzhou, Gansu 730000, P. R. China}\\
\small{E-mail addresses:  qlli@lzu.edu.cn, hejh@lzu.edu.cn and
zhanghp@lzu.edu.cn}}
\date{\today}
\begin{document}
\maketitle
\begin{abstract}
In interconnection networks, matching preclusion is a measure of
robustness when there is a link failure. Let $G$ be a graph of even
order. The matching preclusion number $mp(G)$ is defined as the
minimum number of edges whose deletion results in a subgraph without
perfect matchings. Many interconnection networks are super matched,
that is, their optimal matching preclusion sets are precisely those
induced by a single vertex. In this paper, we obtain general results
of vertex-transitive graphs including many known networks. A
$k$-regular connected vertex-transitive graph has matching
preclusion number $k$ and is super matched except for six classes of
graphs. From this many previous results can be directly obtained and
matching preclusion for some other
 networks, such as folded $k$-cubes, Hamming graphs and halved $k$-cubes, are derived.

\noindent {\textbf{Keywords}.} Matching Preclusion; Networks;
Vertex-transitive Graphs.

\noindent \textbf{2010 Mathematics Subject Classification.} 94C15,
05C70.

\end{abstract}
\section{Introduction}

 A \emph{network} (or \emph{graph}) is a collection of points or nodes, called \emph{vertices},
 and a collection
 of links, called \emph{edges}, each connecting two nodes.
 The number of vertices of a graph $G$ is its \emph{order}, written $|G|$; its number of edges
 is denoted by $||G||$. We use $V(G)$ and $E(G)$ denote the vertex-set and edge-set of $G$
  respectively. Throughout this article, all graphs are assumed to be connected and
 of even order.
 The matching preclusion, viewed as a measure of the robustness
 of graphs, of many networks has been investigated. By summarizing
 these results, we can see that almost all the networks considered are vertex-transitive
 and surprisingly,
 their matching preclusion almost act in the same way. A natural
 question arises: What does the matching preclusion of
 vertex-transitive graphs act? More precisely, can we obtain a unified property on the matching
 preclusion of vertex-transitive graphs?

 A \emph{perfect matching} in a graph is a set of edges such that every vertex is incident
 with exactly one edge in this set. For $S\subseteq E(G)$, if $G-S$ has no perfect matchings,
 where $G-S$ denotes the subgraph of $G$ by deleting $S$ from it, then we call $S$ a
 \emph{matching preclusion set}. The \emph{matching preclusion number}
of a graph $G$, denoted by $mp(G)$, is the minimum cardinality among
all matching preclusion sets. Correspondingly, the matching
preclusion set attaining the matching preclusion number is called an
\emph{optimal matching preclusion set} (or in short, \emph{optimal
solution}). The concept of matching preclusion was introduced by
Brigham et al. for ``measuring the robustness of a communications
network graph which is a  model  for the distributed algorithm that
require each node of it to be matched with a neighboring partner
node ''\cite{Brigham2005}.

Until now, the matching preclusion numbers of lots of networks
(graphs) have been computed, such as Petersen graph, hypercube,
complete graphs and complete bipartite graphs \cite{Brigham2005},
Cayley graphs generalized by transpositions and $(n,k)$-star graphs
\cite{Cheng2007}, augmented cubes \cite{Cheng101},
$(n,k)$-buddle-sort graphs \cite{Cheng10}, tori and related
Cartesian products \cite{Cheng20121},  burnt pancake graphs
\cite{Hu13}, balanced hypercubes \cite{lv2012}, restricted HL-graphs
and recursive circulant $G(2^{m}, 4)$ \cite{Park2008}, and $k$-ary
$n$-cubes \cite{Wang2010}. Their optimal solutions have been also
classified.

By deleting the edges incident with a given vertex in a graph, the
resulting subgraph has no perfect matchings. Hence the matching
preclusion number is bounded by the minimum degree.

\begin{thm}[\cite{Cheng2007}]\label{upperbound0}
Let $G$ be a graph of even order. Then $mp(G) \leq \delta(G)$, where
$\delta(G)$ is the minimum degree of $G$.
\end{thm}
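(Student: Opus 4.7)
The plan is to exhibit an explicit matching preclusion set of size exactly $\delta(G)$; since $mp(G)$ is defined as the minimum cardinality over all such sets, this immediately yields the claimed upper bound. The construction is the most natural one suggested by the definition: isolate a vertex.

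First I would fix a vertex $v \in V(G)$ with $\deg(v) = \delta(G)$, whose existence is guaranteed by the definition of minimum degree. Let $S$ denote the set of all edges of $G$ incident with $v$, so $|S| = \delta(G)$. In the subgraph $G - S$, the vertex $v$ has degree zero and is therefore isolated. Since every perfect matching must saturate every vertex, and an isolated vertex cannot be saturated by any edge, $G - S$ admits no perfect matching. Hence $S$ is a matching preclusion set.

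By the definition of $mp(G)$ as the minimum cardinality of a matching preclusion set, we conclude
\[
mp(G) \leq |S| = \delta(G).
\]
There is essentially no obstacle here: the argument is a one-line observation, and the main content of the theorem is conceptual, namely that it identifies a canonical family of matching preclusion sets (those induced by a single vertex) which will later motivate the notion of a super matched graph discussed in the abstract. I would state the proof in two or three sentences and move on.
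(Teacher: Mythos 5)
Your proposal is correct and matches the paper's own justification exactly: the paper proves this bound with the same one-line observation that deleting the edges incident to a minimum-degree vertex isolates it, so the resulting graph has no perfect matching. Nothing is missing.
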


In a network, a vertex with a special matching vertex after edge
failure any time implies that tasks running on a fault vertex can be
shifted onto its matching vertex. Thus under this fault assumption,
larger $mp(G)$ signifies higher fault tolerance. Fortunately,
matching preclusion numbers of many regular interconnection networks
of degree $k$ attained the maximum value, the minimum degree $k$.
Moreover, the optimal solutions are precisely those induced by a
single vertex except the ones with small order. Formally, we call
the optimal solution incident with a single vertex a \emph{trivial
optimal solution} (\emph{non-trivial optimal solution} otherwise)
and the graphs with all optimal solutions trivial \emph{super
matched}. Generally, in the event of a random link failure, it is
very unlikely that all of the links incident to a single vertex fail
simultaneously. From this point of view, that a graph is super
matched implies that it has higher fault tolerance.

Recalling that the networks whose matching preclusion have been
considered, we can see that many of them are  vertex-transitive
graphs.
A graph $H$ is called \emph{vertex-transitive} if for any two
vertices $x,y$ in $V(H)$, there exists an automorphism $\varphi$ of
$H$ such that $\varphi(x)=y$. From the known results, we can see
that almost all of them are super matched. 
Fortunately, we obtain that almost all vertex-transitive graphs have
such properties, too. Precisely, we get the following result, in
which, $Z_{4n}(1,4n,2n)$ stands for the Cayley graph on $Z_{4n}$,
the additive group modulo $4n$, with the generating set
$S=\{1,4n-1,2n\}$. $Z_{4n+2}(2,4n,2n+1)$ and
$Z_{4n+2}(1,4n+1,2n,2n+2)$ are defined similarly.

\begin{thm}\label{mainresult}
A $k$-regular connected vertex-transitive graph $G$ of even order is
super matched if and only if it doesn't contain cliques of size $k$
when $k$ is odd and $k\leq |G|-2$ or it is not isomorphic to a cycle
of length at least six or $Z_{4n}(1,4n-1,2n)$ or
$Z_{4n+2}(2,4n,2n+1)$ or $Z_{4n+2}(1,4n+1,2n,2n+2)$ or the Petersen
graph.
\end{thm}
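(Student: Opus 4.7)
The plan is to prove both directions via Tutte's theorem, combined with edge counting in $k$-regular graphs and with the fact (Mader--Watkins) that a connected vertex-transitive graph has edge-connectivity equal to its degree, which will be needed to rule out certain near-extremal configurations.

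\textbf{Necessity.} For each listed exception I would exhibit a non-trivial optimal matching preclusion set of size $k$. If $G$ contains a clique $K_k$ with $k$ odd and $k \le |G|-2$, then by $k$-regularity every vertex of the clique has exactly one neighbor outside, so the $k$ edges leaving $V(K_k)$ form a non-trivial edge cut whose removal isolates an odd component of size $k$ (with complement also of odd size, since $|G|$ is even and $k$ is odd); this destroys every perfect matching. For an even cycle of length at least six, a pair of suitably chosen non-incident edges suffices. For each of the three circulant graphs, the ``long'' generator $2n$ or $2n+1$ produces a small edge cut separating two halves of odd order. For the Petersen graph the classical five-edge non-trivial solution attached to a $5$-cycle applies.

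\textbf{Sufficiency.} Suppose $G$ satisfies the hypotheses and let $F$ be an optimal matching preclusion set, so $|F|\le k$ by Theorem~\ref{upperbound0}; assume for contradiction $F$ is non-trivial. Since $|G|$ is even, Tutte's theorem gives $S\subseteq V(G)$ with $o(G-F-S)\ge |S|+2$. For $S=\emptyset$, let $C_1,\dots,C_t$ be the odd components of $G-F$ with $|V(C_i)|=n_i$. The boundary bound $kn_i - 2\|G[V(C_i)]\| \le |F| \le k$ together with $\|G[V(C_i)]\| \le \binom{n_i}{2}$ forces $n_i=1$ or $n_i\ge k$, with equality $n_i=k$ only when $G[V(C_i)]\cong K_k$. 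Non-triviality rules out a single isolated vertex; two isolated vertices or more consume more than $k$ boundary edges except in small extremal instances; and $n_i=k$ supplies a forbidden clique (when $k$ is odd). The extremal residual cases give the Petersen graph and the circulants.

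For $S\neq\emptyset$ the analogous inequality $\sum_i (kn_i - 2\|G[V(C_i)]\|) \le 2|F| + k|S|$ combined with $t\ge |S|+2$ again restricts the $n_i$'s sharply. The extremal configurations fall in distinct degree ranges: $k=2$ forces $G$ to be an even cycle of length at least six, $k=3$ produces the Petersen graph and the two cubic circulants, and $k=4$ yields $Z_{4n+2}(1,4n+1,2n,2n+2)$. For every other connected vertex-transitive $k$-regular graph the strict slack in the inequalities forces $|F|>k$, contradicting optimality. Vertex-transitivity enters crucially here: once one pins down the local structure of an extremal component $C_i$, applying an automorphism sending any vertex of $C_i$ to any other vertex of $G$ rigidifies the global isomorphism type.

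\textbf{Main obstacle.} The delicate part is the $S\neq\emptyset$ analysis. Raw edge counting admits several a priori candidate witnesses for each small $k$, and for each one must verify — using vertex-transitivity — that the only vertex-transitive realization is one of the five listed graphs. Showing exhaustiveness, namely that no further sporadic family slips past the case analysis, is where the real work lies and why the Petersen graph and the three circulants appear as rigidity-forced rather than as ad hoc exceptions.
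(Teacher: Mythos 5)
Your skeleton for the sufficiency direction starts the same way the paper does (Tutte's theorem plus boundary-edge counting to force $c_o(G-F-S)=|S|+2$ with every odd component sending out exactly $k$ edges), but the proposal has genuine gaps at exactly the points where the real content of the theorem lives.

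First, the necessity constructions are partly wrong. The Petersen graph is $3$-regular, so an optimal solution has three edges, not five; a five-edge set cannot be an \emph{optimal} matching preclusion set. More seriously, your claim that for the three circulants ``the long generator produces a small edge cut separating two halves of odd order'' cannot work: $Z_{4n}(1,4n-1,2n)$ and the other two families are triangle-free for $n\geq 2$, hence super-$\lambda$ by Theorem~\ref{superlambda}, so every $k$-edge-cut is trivial and no non-trivial optimal solution can be an edge-cut. The paper's witnesses for (c)--(f) are not edge-cuts; they are sets $F$ paired with a nonempty Tutte set $S$ such that $G-F-S$ consists of $|S|+2$ isolated vertices. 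You also never establish $mp(G)=k$ (the paper needs Plesn\'{\i}k's theorem together with $\lambda(G)=k$ for the lower bound); without it, ``optimal solution of size $k$'' and the whole trivial/non-trivial dichotomy are not yet meaningful.

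Second, in sufficiency your edge counting is too weak to reach the clique conclusion. From $kn_i-2\|G[V(C_i)]\|\leq k$ and $\|G[V(C_i)]\|\leq\binom{n_i}{2}$ you only get $n_i=1$ or $n_i\geq k$, and for $n_i\geq k$ the component is merely near-$k$-regular, not a clique. The paper instead observes that each component's boundary is a minimum edge-cut and invokes the Lov\'asz--Plummer structure theorem (Corollary~\ref{minedgecut}) to conclude each component is a singleton or partitioned into $k$-cliques; parity of the odd components then forces $k$ odd in the clique case. Third, and most importantly, the entire identification of the exceptional graphs in the $S\neq\emptyset$ case is asserted rather than proved. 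Once all components are singletons one is reduced to: $G$ is vertex-transitive, non-bipartite, and has an independent set whose complement exceeds it by two. The paper's Lemma~\ref{structure} handles this with a genuine argument --- a double-counting of minimum odd cycles showing $g_o(G)\geq|G|/2$, a bound $3\leq k\leq 4$, and then a delicate analysis of quadrangles and odd cycles (Lemmas~\ref{adjacenttoatmosttwo} and~\ref{k3g4}) plus the Read--Wilson atlas for small cubic cases. Your appeal to ``rigidity forced by vertex-transitivity'' names the difficulty without resolving it; as written, the exhaustiveness of your list of exceptions is exactly what remains unproved. Finally, the bipartite case (where the even-cycle exception arises) needs separate treatment; the paper uses Hall's theorem to show $F$ is an edge-cut and then the super-$\lambda$ characterization, which your outline omits.
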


 This article is organized as follows.  In Section 2, we will
 analyse some structural properties of vertex-transitive graphs.
 In Section 3, we present the proof of Theorem \ref{mainresult}.
 In Section 4, we make a conclusion and several applications to obtain the
 matching preclusion of some networks.

\section{Preliminaries}

In this section, we shall present several results that will be used
later. An edge set $S\subseteq E(G)$ is called an \emph{edge-cut} if
there exists a set $X\subseteq V(G)$ such that $S$ is the set of
edges between $X$ and $V(G)\setminus
 X$. The\emph{ edge-connectivity} $\lambda(G)$ of $G$ is the minimum
 cardinality over all edge-cuts of $G$. Mader proved the following
 result.
\begin{Lemma}[\cite{Mader71}]\label{Mader71}
If $G$ is a $k$-regular connected  vertex-transitive graph, then
$\lambda(G)=k$.
\end{Lemma}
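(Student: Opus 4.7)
The plan is to prove the identity $\lambda(G)=k$ by contradiction, using the standard ``atom'' technique with the submodular inequality for edge cuts, and then exploiting vertex-transitivity at the end to force a contradiction with the connectedness of $G$. The upper bound $\lambda(G)\le k$ is free: in any graph, deleting the $k$ edges incident with a fixed vertex disconnects it, so $\lambda(G)\le\delta(G)=k$. So I assume $\lambda(G)\le k-1$ and work towards a contradiction.

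First I would set up atoms. Call a proper non-empty $A\subsetneq V(G)$ with $|[A,\bar A]|=\lambda(G)$ a \emph{fragment}, and an \emph{atom} a fragment $X$ with $|X|\le|V(G)|/2$ and $|X|$ minimum among all such fragments. Using the standard submodular inequality
\[
 |[A\cap B,\overline{A\cap B}]|+|[A\cup B,\overline{A\cup B}]|\ \le\ |[A,\bar A]|+|[B,\bar B]|
\]
together with the definition of $\lambda(G)$ (every proper non-empty set has cut size $\ge\lambda$), I would show that any two atoms are either equal or disjoint. The case $A\cup B=V$ has to be handled separately by observing that $V\setminus B$ is itself a fragment.

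Next I would use vertex-transitivity to promote this into a partition. Any automorphism $\varphi$ sends an atom to an atom (since it preserves the cut size), and vertex-transitivity guarantees that every $u\in V(G)$ lies in $\varphi(X)$ for some automorphism $\varphi$ and some atom $X$. Combined with the disjointness, the atoms partition $V(G)$ into blocks of common size $a=|X|\ge 2$ (size $\ge 2$ because $|X|=1$ forces $\lambda=k$). An edge-count in $G[X]$ gives $2\|G[X]\|=ka-\lambda$, and since each vertex of $X$ has degree at most $a-1$ in $G[X]$, I get $ka-\lambda\le a(a-1)$, i.e.\ $\lambda\ge a(k-a+1)$. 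For $2\le a\le k$ the right-hand side is $\ge k$, contradicting $\lambda\le k-1$; hence $a\ge k+1$.

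Finally I would exploit the key numerical consequence: with $a\ge k+1$ and $\lambda\le k-1$ we have $\lambda/a<1$, so the average degree in $G[X]$ is $k-\lambda/a>k-1$, forcing some $v\in X$ with $\deg_{G[X]}(v)=k$, i.e.\ all $k$ neighbours of $v$ inside $X$. Now for any $u\in V(G)$, pick an automorphism $\varphi$ with $\varphi(v)=u$; then $\varphi(X)$ is the atom containing $u$, and all $k$ neighbours of $u$ lie in $\varphi(X)$. Thus no edge of $G$ crosses between two different atoms, so the atoms are the connected components of $G$. Since there are at least two atoms (because $|X|\le|V(G)|/2$), this contradicts the connectedness of $G$ and completes the proof.

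The main obstacle is Step 1, the disjointness of atoms: the submodularity inequality works cleanly only when both $A\cap B$ and $A\cup B$ are proper non-empty subsets, and one must argue carefully in the boundary cases (using the fact that taking complements preserves the property of being a fragment, and that the atom is chosen to minimise size subject to $|X|\le|V(G)|/2$). Once atoms partition $V(G)$, the remainder is a short edge-counting argument plus one application of vertex-transitivity.
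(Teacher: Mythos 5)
The paper does not prove this lemma at all: it is quoted directly from Mader's 1971 paper as a known external result, so there is no internal argument to compare yours against. Your blind proof is, as far as I can check, correct and complete in outline; it is the standard ``atom'' proof of Mader's theorem. All the key steps are sound: distinct atoms are disjoint by submodularity (and your worrisome boundary case $A\cup B=V(G)$ is in fact vacuous once atoms are required to have size at most $|V(G)|/2$, since then $|A\cap B|=|A|+|B|-|V(G)|\le 0$); automorphisms carry atoms to atoms, so vertex-transitivity turns them into a partition of $V(G)$ into blocks of common size $a\ge 2$; the count $2\,||G[X]||=ka-\lambda$ together with $\deg_{G[X]}(v)\le a-1$ forces $a\ge k+1$ under the assumption $\lambda\le k-1$; and the resulting average degree $k-\lambda/a>k-1$ in $G[X]$ yields a vertex all of whose neighbours lie inside its atom, which transitivity propagates to every vertex, so no edge crosses between atoms and $G$ is disconnected --- the desired contradiction. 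The only loose ends are trivial edge cases: for $k=1$ the assumption $\lambda\le k-1=0$ already contradicts connectedness, and the endpoint estimate $a(k-a+1)\ge k$ on $2\le a\le k$ uses $k\ge 2$, which is then automatic. In short: your proof is correct, and it necessarily takes a different route from the paper, because the paper supplies none.
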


 The following lemma makes a step further by characterizing the minimum edge-cuts of
vertex-transitive graphs, where a clique of a graph $G$ is a subset
of its vertices such that every two vertices in the subset are
connected by an edge.

\begin{thm}[\cite{LovaszandPlummer86}, Lemma 5.5.26]\label{kedgeconnected}
Let $G$ be a $k$-regular connected vertex-transitive graph. Then
$\lambda(G)=k$ and either

\noindent (i) every minimum edge-cut of $G$ is the star of a vertex,
or

\noindent (ii) $G$ arises from a (not necessarily simple) vertex-
and edge-transitive $k$-regular graph $G_{0}$ by a $k$-clique
(cliques of size $k$) insertion at each vertex of $G_{0}$. Moreover,
every minimum edge-cut of $G$ is the star of a vertex of $G$ or a
minimum edge-cut of $G_{0}$.
\end{thm}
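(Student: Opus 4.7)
The plan is to build on Mader's theorem (Lemma \ref{Mader71}), which already supplies $\lambda(G)=k$, and concentrate on the structural dichotomy for minimum edge-cuts. I work with \emph{fragments}: proper non-empty subsets $X\subset V(G)$ with $|\partial X|=k$, where $\partial X$ denotes the edge boundary. A fragment is \emph{trivial} when $|X|=1$ or $|X|=|V(G)|-1$ (the star of a vertex) and \emph{non-trivial} otherwise. Since $G$ is $k$-regular, every singleton is automatically a trivial fragment, so case (i) is exactly the statement that no non-trivial fragment exists. I therefore assume that at least one non-trivial fragment exists and aim for the clique-insertion structure of case (ii).

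Pick a non-trivial fragment $X$ of minimum cardinality; after replacing it by $V(G)\setminus X$ if needed, assume $|X|\le |V(G)|/2$. The first major step is to show $G[X]\cong K_k$. Three ingredients combine to force this. (a) Degree-counting inside $X$ yields $|E(G[X])|=k(|X|-1)/2$ and hence $|X|\ge k$. (b) The vertex-removal identity $|\partial(X\setminus\{u\})|=2k-2d_{V(G)\setminus X}(u)$, combined with $\lambda(G)=k$, forces $d_{V(G)\setminus X}(u)\le k/2$ for every $u\in X$, and turns $X\setminus\{u\}$ into a fragment whenever equality holds. (c) The submodular inequality $|\partial(A\cap B)|+|\partial(A\cup B)|\le |\partial A|+|\partial B|$ applied to $X$ and its $\mathrm{Aut}(G)$-translates $\sigma X$ controls how fragments can overlap. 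The minimality of $|X|$ among non-trivial fragments turns (b) and (c) into rigidity: any smaller fragment produced by vertex removal or by intersection with a translate must be a singleton, and together with the edge-count from (a) this pins $|X|=k$ and $G[X]=K_{|X|}=K_k$.

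Once a single $K_k$-fragment $X$ is in hand, vertex-transitivity of $G$ delivers, for each $v\in V(G)$, an automorphism $\sigma\in\mathrm{Aut}(G)$ sending some vertex of $X$ to $v$; then $\sigma X$ is a $K_k$-fragment containing $v$. Any two such $K_k$-fragments must be disjoint, because each vertex of $G$ has exactly one edge leaving its clique (its other $k-1$ neighbors lie inside), so an overlap between two atoms would produce a vertex with two distinct outside-partners. Hence the $\mathrm{Aut}(G)$-orbit of $X$ partitions $V(G)$ into $K_k$-atoms. Contracting each atom to a point gives a multigraph $G_0$ which is $k$-regular (each atom sends out exactly $k$ edges), vertex-transitive (via the induced action of $\mathrm{Aut}(G)$ on atoms), and edge-transitive (because each vertex of $G$ has a \emph{unique} external neighbor, so an automorphism of $G$ mapping one vertex to another automatically maps the first vertex's unique external edge to the second vertex's unique external edge).

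Finally, for the classification of all minimum edge-cuts: any minimum edge-cut $[Y,V(G)\setminus Y]$ with $|Y|\ge 2$ is itself a non-trivial fragment, and repeating the minimality/submodularity analysis against the $K_k$-atoms shows that $Y$ must be a union of whole atoms; after contraction it descends to a minimum edge-cut of $G_0$, and the converse lift is immediate, so every minimum edge-cut of $G$ is either a star or the preimage of a minimum edge-cut of $G_0$. The main obstacle is the second paragraph: turning $|E(G[X])|=k(|X|-1)/2$ and $d_{V(G)\setminus X}(u)\le k/2$ into the rigid conclusion $G[X]=K_k$ requires a careful case analysis of how $X$ interacts with its $\mathrm{Aut}(G)$-translates via submodularity, and this is where the bulk of the technical work lives; once a single $K_k$-atom has been exhibited, the remainder of the theorem falls out quickly.
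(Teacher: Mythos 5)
The paper does not prove this statement at all --- it is quoted verbatim from Lov\'asz and Plummer's \emph{Matching Theory} (Lemma 5.5.26) and used as a black box --- so there is no in-paper argument to measure you against; I can only assess your sketch on its own terms. Your overall strategy (fragments, a minimum non-trivial fragment, submodularity of the edge boundary, disjointness of translates, contraction to $G_{0}$) is indeed the classical route to this result. But there is a genuine gap exactly where you place ``the bulk of the technical work'': the claim that the minimal non-trivial fragment $X$ induces $K_{k}$. Your ingredient (a) gives $|E(G[X])|=k(|X|-1)/2$, and simplicity of $G[X]$ then forces $|X|\geq k$ --- the \emph{wrong} direction. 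Ingredient (b) only yields $d_{V(G)\setminus X}(u)\leq k/2$, which is consistent with $|X|$ being much larger than $k$ (e.g.\ $k$ vertices of inner degree $k-1$ and many of inner degree $k$ satisfy all your constraints), and ingredient (c) as stated does not close this off. Nothing you have written produces the inequality $|X|\leq k$, and asserting that minimality ``pins'' $|X|=k$ is not a proof.

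The standard way to obtain $|X|\leq k$ also shows that your logical order is inverted. One first uses submodularity to prove that distinct $\mathrm{Aut}(G)$-translates of the atom $X$ are either equal or disjoint, so the atoms form a block system; it follows that the setwise stabilizer of $X$ acts transitively on $X$, hence $G[X]$ is $d$-regular for some $d$ and every vertex of $X$ has the same out-degree $k-d$. Counting the $k$ boundary edges gives $|X|(k-d)=k$ with $k-d\geq 1$, whence $|X|\leq k$; combined with $|X|\geq k$ from simplicity this forces $|X|=k$ and $d=k-1$, i.e.\ $G[X]\cong K_{k}$, and the unique external neighbour of each vertex then gives both the disjointness you invoke and the edge-transitivity of the contracted multigraph $G_{0}$. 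Your sketch instead tries to establish the clique structure first and disjointness second, which cannot work because the transitive stabilizer action (and hence the regularity of $G[X]$) is only available after the block-system step. To turn your proposal into a proof you must supply the equal-or-disjoint lemma for atoms and rearrange the argument accordingly; alternatively, since the paper itself simply cites Lov\'asz--Plummer here, a citation would be the appropriate resolution.
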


The following corollary that will be used in Section 3 follows
immediately. An edge-cut is called \emph{trivial} if it isolates a
vertex and \emph{non-trivial} otherwise.

\begin{cor} \label{minedgecut}
For a $k$-regular connected vertex-transitive graph $G$, every
$k$-edge-cut (an edge-cut of size $k$) is either trivial or the
deletion of it results in two components, and each component's
vertices are partitioned into several $k$-cliques.
\end{cor}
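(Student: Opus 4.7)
The plan is to derive this corollary almost immediately from Theorem \ref{kedgeconnected}, by showing that a non-trivial $k$-edge-cut must come from case (ii) and then reading off the clique decomposition of the two components.

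First I would fix a $k$-edge-cut $S$ of $G$. By Mader's lemma (Lemma \ref{Mader71}), $\lambda(G)=k$, so $S$ is a minimum edge-cut; in particular, removing $S$ disconnects $G$ into exactly two components $G[X]$ and $G[V(G)\setminus X]$ (this is standard: if there were a third component, we could discard the edges going to it and still have a disconnecting set of smaller size). Now I invoke Theorem \ref{kedgeconnected}. If $G$ is of type (i), then $S$ is the star of some vertex $v$, which is precisely the definition of a trivial edge-cut; if additionally $S$ is assumed to be non-trivial, we fall into case (ii).

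In case (ii), $G$ is obtained from a $k$-regular vertex- and edge-transitive graph $G_{0}$ by inserting a $k$-clique $K_{v}$ at each vertex $v$ of $G_{0}$; the edge set of $G$ therefore splits into the intra-clique edges (one copy of $K_{k}$ at each $v \in V(G_{0})$) and the inter-clique edges (one edge of $G$ for each edge of $G_{0}$, connecting distinct cliques). By the second assertion of Theorem \ref{kedgeconnected}, $S$ is either a star (trivial, excluded) or a minimum edge-cut of $G_{0}$, regarded as a set of inter-clique edges in $G$. Because $S$ contains no intra-clique edge, every clique $K_{v}$ of the insertion survives entirely inside one of the two components of $G-S$. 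This shows that each component's vertex set is a union of the $k$-cliques $K_{v}$ with $v$ in the corresponding side of the cut of $G_{0}$, which is exactly the conclusion claimed.

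The only step requiring real care is the correct interpretation of case (ii) of Theorem \ref{kedgeconnected}, and the observation that a minimum edge-cut of $G_{0}$ consists entirely of inter-clique edges of $G$; once that structural point is pinned down, the partition of each component into $k$-cliques is automatic, so I do not expect any genuine technical obstacle.
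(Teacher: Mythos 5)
Your proposal is correct and follows exactly the route the paper intends: the paper offers no written proof, stating only that the corollary ``follows immediately'' from Theorem \ref{kedgeconnected}, and your argument (a $k$-edge-cut is a minimum edge-cut by Lemma \ref{Mader71}, hence either a star or, in case (ii), a minimum edge-cut of $G_{0}$ consisting solely of inter-clique edges, so each inserted $k$-clique survives intact in one of the two components) is precisely the intended deduction, spelled out in full.
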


For  a $k$-regular graph $G$, if every minimum edge-cut of it is
trivial, then we say it is
 \emph{super-edge-connected} (or simply
\emph{super-$\lambda$}). For vertex-transitive graphs, J. Meng has
presented a characterization with respect to the cliques.

\begin{thm}[\cite{Meng2003}]\label{superlambda}
Let $G$ be a $k$-regular connected vertex-transitive graph which is neither a complete graph nor a cycle. Then $G$ is super-$\lambda$
if and only if it does not contain $k$-cliques.
\end{thm}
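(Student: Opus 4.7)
The plan is to use Theorem \ref{kedgeconnected} as the principal tool. That theorem classifies every minimum edge-cut of a $k$-regular connected vertex-transitive graph into one of two types: either a vertex-star (which is the super-$\lambda$ property), or a cut arising from the clique-insertion construction on some $k$-regular vertex- and edge-transitive graph $G_0$. Since the clique-insertion construction in case (ii) visibly produces $k$-cliques, the presence or absence of $k$-cliques is precisely what distinguishes the two cases.

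For the ``if'' direction, assume $G$ contains no $k$-clique. Apply Theorem \ref{kedgeconnected}: case (ii) would force $G$ to contain $k$-cliques (one at each vertex of $G_0$), contradicting the hypothesis. Hence case (i) holds, meaning every minimum edge-cut of $G$ is the star of some vertex, which is exactly the definition of super-$\lambda$.

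For the ``only if'' direction, I would prove the contrapositive: suppose $G$ contains a $k$-clique $K$. Each vertex of $K$ has $k-1$ neighbors inside $K$, hence exactly one edge leaving $K$, so the set of edges between $K$ and $V(G)\setminus K$ has cardinality exactly $k$. By Lemma \ref{Mader71} this is a minimum edge-cut. It remains to show this cut is non-trivial, i.e.\ that neither side is a single vertex. Since $G$ is connected but not a cycle, the only case with $k\leq 2$ would be $k=1$ (giving $G=K_2$, a complete graph) or $k=2$ (giving a cycle), both excluded; so $k\geq 3$ and $|K|=k\geq 3\geq 2$. Since $G$ is $k$-regular but not a complete graph, $|V(G)|\geq k+2$ (a $k$-regular graph on $k+1$ vertices is $K_{k+1}$), so $|V(G)\setminus K|\geq 2$. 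Hence the cut is non-trivial, and $G$ is not super-$\lambda$.

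The proof reduces almost entirely to invoking Theorem \ref{kedgeconnected}, so there is no serious combinatorial obstacle. The only delicate point is the boundary bookkeeping in the contrapositive direction: one has to use both exclusions (not complete, not a cycle) to guarantee $k\geq 3$ and $|V(G)|\geq k+2$, which in turn ensures the $k$-clique actually produces a non-trivial minimum edge-cut rather than merely isolating a vertex (as would happen, e.g., in $K_{k+1}$ itself).
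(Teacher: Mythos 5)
The paper does not prove Theorem \ref{superlambda} at all: it is imported verbatim from Meng's paper \cite{Meng2003} and used as a black box, so there is no in-paper argument to compare yours against. Judged on its own, your derivation from Theorem \ref{kedgeconnected} is correct and is essentially the standard way to obtain Meng's characterization from the Lov\'asz--Plummer structure theorem. The ``if'' direction is immediate as you say: alternative (ii) manifestly plants a $k$-clique at every vertex of $G_0$, so a $k$-clique-free $G$ must fall under (i). Your ``only if'' direction is also sound, and you are right that the boundary bookkeeping is the only delicate point: the exclusions give $k\geq 3$ and $|V(G)|\geq k+2$, each vertex of the clique $K$ sends exactly one edge outside, and Lemma \ref{Mader71} certifies that the resulting $k$-edge-cut is minimum. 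One small point worth tightening: the paper defines a trivial edge-cut as one that \emph{isolates a vertex}, which is not literally the same as ``neither side of the partition is a singleton.'' A vertex of $K$ keeps $k-1\geq 2$ neighbours after the cut is deleted, and a vertex outside $K$ could only be isolated if all $k$ of its neighbours lay in $K$; since $K$ emits only $k$ edges in total, that would force $K\cup\{v\}\cong K_{k+1}$ to be a component and hence $G=K_{k+1}$, contradicting non-completeness. Adding that one sentence closes the argument completely; otherwise the proof stands.
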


Theorem \ref{superlambda} is used to characterize the structure of
3-regular connected non-bipartite vertex-transitive graphs with
respect to the length of the minimum odd cycles. As we will see,
minimum odd cycles play a crucial role in the following proof in
this section. Here we make a convention that is suitable throughout
this paper. For a cycle drawn on the plane without crossings, let
$a,b\in V(C)$, denote $P_{ab}$ by the path of $C$ from $a$ to $b$
along a clockwise direction.
A cycle $C$ is called a \emph{minimum odd cycle} if $|C|$ is odd and
it is the minimum among lengths of all odd cycles. For a minimum odd
cycle, we usually say it is\emph{ minimum}. The following two
results (Lemmas \ref{adjacenttoatmosttwo} and \ref{k3g4}) will be
used to prove Lemma \ref{structure}, which play an important role in
the proof of Theorem \ref{mainresult} in the next section.

\begin{Lemma}\label{adjacenttoatmosttwo}
Let $G$ be  a non-bipartite connected vertex-transitive graph of
even order and $C$ be a minimum odd cycle in $G$ with $|C|\geq
\frac{|G|}{2}$. If $G$ is not isomorphic to $K_{4}$ or $K_{6}$, then
any vertex in $V(G)\setminus V(C)$ is adjacent to at most two
vertices in $V(C)$. If in addition, a vertex $v\in V(G)\setminus
V(C)$ is adjacent to two vertices $u$ and $w$ in $V(C)$, then either
$P_{uw}$ or $P_{wu}$ in $C$ is of length 2 and further $G$ has a
quadrangle containing $v$, $u$ and $w$.
\end{Lemma}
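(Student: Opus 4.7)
The plan is to fix a vertex $v \in V(G) \setminus V(C)$ and analyse the odd cycles $v$ forms together with the arcs of $C$ between its neighbours on $C$. Label those neighbours $u_1, \dots, u_t$ in cyclic order around $C$, and write $P_i$ for the arc of $C$ from $u_i$ to $u_{i+1}$ (indices mod $t$). The central observation is: if $|P_i|$ is odd then $v\, u_i\, P_i\, u_{i+1}\, v$ is an odd cycle of length $|P_i|+2$, so the minimality of $C$ as an odd cycle forces $|P_i| \geq |C|-2$.

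To prove $t \leq 2$, I would assume $t \geq 3$ for contradiction. Since $|C| = \sum_i |P_i|$ is odd, the number $r$ of arcs with odd length is itself odd. If $r = 1$, the remaining $t-1$ even arcs each have length at least $2$, so $|C| \geq (|C|-2) + 2(t-1)$, which forces $t \leq 2$, contradicting $t \geq 3$. If $r \geq 3$, summing the bound $|P_i| \geq |C|-2$ over the odd arcs alone yields $|C| \geq r(|C|-2)$, hence $|C| \leq 2 + 2/(r-1) \leq 3$; since $|C|$ is odd, $|C| = 3$, every arc has length $1$, and $t = r = 3$. Then $\{v, u_1, u_2, u_3\}$ induces a $K_4$ in $G$, and $|C| \geq |G|/2$ forces $|G| \in \{4,6\}$. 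The case $|G|=4$ gives $G = K_4$ immediately. For $|G|=6$, writing $\{e,f\} = V(G) \setminus V(K_4)$ and $\epsilon \in \{0,1\}$ for whether $ef$ is an edge, $k$-regularity gives $4(k-3) = 2(k-\epsilon)$, whence $k = 6 - \epsilon = 5$ and $G = K_6$. Both outcomes are excluded by hypothesis, so $t \leq 2$.

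For the second assertion, suppose $v$ is adjacent to exactly $u$ and $w$ in $V(C)$; the arcs $P_{uw}$ and $P_{wu}$ have lengths $a$ and $b$ with $a + b = |C|$ odd, so exactly one of them is odd. Without loss of generality $a$ is odd. Then $v\, u\, P_{uw}\, w\, v$ is an odd cycle of length $a+2 \geq |C|$, forcing $b \leq 2$, and since $b$ is even and positive we get $b = 2$. Hence $P_{wu}$ is a path $w - x - u$ for some $x \in V(C)$, and $v - u - x - w - v$ is the quadrangle containing $v$, $u$ and $w$. The case where $b$ is odd is symmetric, yielding $a = 2$ and an analogous quadrangle.

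The main obstacle is the $|G|=6$ subcase above: one must verify that the only $k$-regular connected vertex-transitive graph on six vertices containing a $K_4$ is $K_6$ itself, and do so without invoking a classification. The edge-count sketched handles this cleanly, but it requires careful bookkeeping of whether the two ``outside'' vertices are joined.
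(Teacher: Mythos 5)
Your proposal is correct and follows essentially the same route as the paper: both use the minimality of the odd cycle $C$ to force any odd arc between consecutive neighbours of $v$ to have length at least $|C|-2$, reduce the three-neighbour case to $|C|=3$ and $|G|\le 6$, exclude $K_4$ and $K_6$, and then derive the quadrangle from the even arc having length exactly $2$. Your parametrization by the number $r$ of odd arcs and your explicit degree count ruling in only $K_6$ when $|G|=6$ are slightly more systematic than the paper's ``one can easily check,'' but the underlying argument is the same.
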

\begin{proof} We first draw $C$ on the plane without crossings. Suppose by
the contrary that  there exists a vertex $v\in V(G)\setminus V(C)$
with at least three neighbors in $V(C)$. Let $a$, $b$ and $c$ be
three neighbors of $v$ in $V(C)$ and they are placed in $C$ in a
successive order along the clockwise direction. Since
$||P_{ab}||+||P_{bc}||+||P_{ca}||=||C||$ is odd, at least one of
$||P_{ab}||$, $||P_{bc}||$ and $||P_{ca}||$ is odd, we may assume
that $||P_{ab}||$. Hence $||P_{bc}||+||P_{ca}||$ is even. If
$||P_{bc}||+||P_{ca}||=2$, then $vac$ is an odd cycle of length
three, so is $C$ since $C$ is minimum. Further, $|G|\leq 2||C||=6$,
we can easily check that $G$ is isomorphic to $K_{4}$ or $K_{6}$,
which contradicts the hypothesis. If $||P_{bc}||+||P_{ca}||\geq 4$,
then $E(P_{ab})\cup \{va, vb\}$ induces an odd cycle of smaller
length than $C$, which contradicts that $C$ is minimum.

By the above arguments, any vertex in $V(G)\setminus V(C)$ is
adjacent to at most two vertices in $V(C)$. If $v\in V(G)\setminus
V(C)$ is adjacent to exactly two vertices $u$ and $w$ in $V(C)$,
then by $||P_{uw}||+||P_{wu}||=||C||$ is odd, either $||P_{uw}||$ or
$||P_{wu}||$ is even. We may assume that $||P_{uw}||$ is even.
Further, the edges of $P_{wu}$ and  $\{vu,vw\}$ form an odd cycle of
length at least $||C||$. Hence $||P_{uw}||=2$ and $E(P_{uw})\cup
\{vu, vw\}$ induces a quadrangle containing $v$, $u$ and $w$.
\end{proof}

For a graph $G$, let the girth (odd girth) of $G$, denoted by $g(G)$
($g_{o}(G)$), be the length of a shortest cycle (odd cycle) in $G$.
We say two quadrangles in $G$ \emph{adjacent} if they share vertices
or edges.

\begin{Lemma}\label{k3g4}
Let $G$ be a 3-regular connected non-bipartite vertex-transitive
graph of girth 4. If $G$ has adjacent quadrangles, then it is
isomorphic to $Z_{4n}(1, 4n-1, 2n)$ or $Z_{4n+2}(2, 4n, 2n+1)$ with
$n\geq 2$. Otherwise $g_{o}(G)\leq \frac{|G|}{2}$.
\end{Lemma}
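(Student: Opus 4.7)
The lemma splits into two claims, which I would handle in turn.

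For the adjacent-quadrangles assertion, I would start from the observation that in a $3$-regular graph two distinct quadrangles sharing a vertex $v$ must share at least one edge at $v$ (by pigeonhole: each uses two of the three edges at $v$). I would then rule out the possibility that they share \emph{two} edges at $v$: that case forces two vertices to have three common neighbors, producing a $K_{2,3}$ subgraph whose size-$2$ and size-$3$ parts have structurally incomparable neighborhoods, contradicting vertex-transitivity. So the two quadrangles share exactly one edge $uv$. Writing $Q_1 = uvwx$ and $Q_2 = uvyz$, a short case analysis on coincidences among $\{u,v,w,x,y,z\}$ (every identification collapses into a $K_4$, forbidden by $g(G)=4$) shows these six vertices are pairwise distinct, and each of $w,x,y,z$ has a unique undetermined third neighbor. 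Transporting the pair of edge-sharing quadrangles to every vertex via vertex-transitivity now forces a rigid periodic ``ladder'' configuration; closing it up on the finite vertex set yields either the prism-type $Z_{4n+2}(2,4n,2n+1)$ or the M\"obius-type $Z_{4n}(1,4n-1,2n)$, depending on whether the ladder closes untwisted or half-twisted.

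For the remaining assertion, I argue by contradiction. Since $g(G)=4$ and $G$ is vertex-transitive, every vertex of $G$ lies in at least one quadrangle, and the no-adjacent-quadrangle hypothesis makes this quadrangle unique; so the quadrangles partition $V(G)$ into $|G|/4$ blocks. Suppose $g_o(G)>|G|/2$ and let $C$ be a minimum odd cycle. Because $|C|\geq|G|/2$ and $G$ is neither $K_4$ nor $K_6$ (being $3$-regular of girth $4$), Lemma \ref{adjacenttoatmosttwo} applies: every vertex outside $C$ has at most two neighbors on $C$, and if exactly two then at $C$-distance $2$ forming a quadrangle with it. A chord of $C$ would split $C$ into arcs of opposite parities, producing a strictly shorter odd cycle, so $C$ is induced.

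Classifying each quadrangle meeting $C$ by its number of $C$-vertices, a degree argument rules out $1$ and $4$, leaving only type~A (three consecutive $C$-vertices plus one outside vertex adjacent to the two extremes) and type~B (two consecutive $C$-vertices plus two outside vertices joined by an edge). This partitions $V(C)$ cyclically into blocks of sizes $3$ and $2$. Writing $a,b,d$ for the numbers of type~A, type~B, and wholly off-$C$ quadrangles gives $|C|=3a+2b$ and $|G|=4(a+b+d)$, so the hypothesis $|C|>|G|/2$ becomes $a>2d$ with $a$ forced to be odd. The heart of the argument, which I expect to be the main obstacle, is to trace the external neighbor of the middle vertex of each type~A block: by Lemma \ref{adjacenttoatmosttwo} it must have a second $C$-neighbor at $C$-distance $2$, and analyzing the quadrangle containing it (using once more the no-adjacent-quadrangle hypothesis together with the absence of chords on $C$) forces every type~A block to be flanked on both sides by type~B blocks on $C$. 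Following the off-$C$ adjacencies among the outside vertices of these forced type~B blocks then produces either two adjacent quadrangles (contradicting the case hypothesis) or an odd closed walk in $G$ strictly shorter than $C$ (contradicting the minimality of $C$), completing the proof.
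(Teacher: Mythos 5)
Your first half (the adjacent\nobreakdash-quadrangles case) follows essentially the paper's route: two quadrangles in a cubic graph sharing a vertex share an edge, the two-shared-edges case is eliminated, and a maximal ladder $K_{2}\times P_{m}$ is closed up into the prism-type or M\"obius-type circulant. One caveat there: a $K_{2,3}$ subgraph does not by itself contradict vertex-transitivity ($K_{3,3}$ is vertex-transitive and contains $K_{2,3}$); the paper instead notes that, girth $4$ forbidding triangles, every $3$-edge-cut is trivial (Theorem \ref{superlambda}), so the three edges leaving the $K_{2,3}$ must all go to one vertex, forcing $G=K_{3,3}$ and contradicting non-bipartiteness. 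Your version needs that extra input to close.

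The second half has a genuine gap at exactly the point you flag as ``the heart of the argument.'' Your bookkeeping $|C|=3a+2b$, $|G|=4(a+b+d)$, reduce-to-refuting $a\geq 2d+1$ is sound and is an equivalent reformulation of the paper's count of vertices outside $C$. But the pivotal claim --- that the external neighbor $t$ of the middle vertex $q$ of a type~A block ``must have a second $C$-neighbor at $C$-distance $2$'' --- is not what Lemma \ref{adjacenttoatmosttwo} says (it gives \emph{at most} two $C$-neighbors), and in fact it is impossible here: if $t$ had a second $C$-neighbor, Lemma \ref{adjacenttoatmosttwo} would place $t$ and its two $C$-neighbors in a quadrangle, which would be adjacent to the type~A quadrangle through $q$, contradicting the case hypothesis. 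Equivalently, $t$ is not an outside vertex of any quadrangle meeting $C$, so its unique quadrangle $Q_{t}$ lies entirely off $C$ and $t$'s two non-$q$ edges stay off $C$. So the ``every type~A block is flanked by type~B blocks'' conclusion has no support, and the remaining step (tracing off-$C$ adjacencies to a contradiction) is only asserted. What the paper actually does at this point is show that the $a$ vertices $t$ force at least $a$ \emph{further} vertices inside the off-$C$ quadrangles (i.e.\ $4d-a\geq a$, hence $a\leq 2d$, the desired contradiction), by a case analysis on how many of the four vertices of a single $Q_{t}$ are themselves $t$-vertices; the degenerate cases (three or four $t$'s on one off-$C$ quadrangle) produce an odd cycle shorter than $C$. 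Your proposal would need this quantitative step, or a correct replacement for the flanking claim, to go through.
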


\begin{proof} By Theorem \ref{Mader71}, $G$ is 3-edge-connected.
Further, since $G$ is of girth 4, that is, $G$ does not contains
3-cliques (triangles), every 3-edge-cut is trivial by Theorem
\ref{superlambda}.


If $G$ has adjacent quadrangles $q_{1}$ and $q_{2}$, then by the
3-regularity of $G$, $q_{1}$ and $q_{2}$ should share at least one
edge. If $q_{1}$ and $q_{2}$ share exactly three edges, then we
obtain multiple edges, contradicting that $G$ is simple; If $q_{1}$
and $q_{2}$ share exactly two edges, let $H$ be the subgraph of $G$
induced by the edges of $q_{1}$ and $q_{2}$, then $H$ has exactly
three vertices of degree 2 and the others of degree 3. Since every
3-edge-cut of $G$ is trivial, we obtain that the three degree-2
vertices in $H$ are adjacent to a common neighbor, that is, $G$ is
$K_{3,3}$, which contradicts that $G$ is non-bipartite. By the above
arguments, $q_{1}$ and $q_{2}$ should share exactly one edge, that
is, $G$ contains $K_{2}\times P_{3}$ as a subgraph, where $`\times$'
means the Cartesian product of graphs and $P_{3}$ denotes a path
$P_{m}$ with $m=3$ vertices. Let $K_{2}\times P_{m}$ with $m\geq 3$
be a subgraph of $G$ with $m$ maximum (see Figure \ref{34} (left)).
For the four vertices $x_{1}, y_{1}, x_{m}$ and  $y_{m}$, if any two
of them are adjacent to each other, then the remaining two of them
are adjacent by the 3-connectivity of $G$, this implies that $G$ is
isomorphic to $Z_{2m}(1, 2m-1, m)$ when $m$ is even or $Z_{2m}(2,
2m-2, m)$ when $m$ is odd with $m\geq 3$ (see Figure \ref{34} (the
middle and right ones)). If no two of them are adjacent to each
other, then we may suppose that $x_{m}$ is adjacent to $x_{m+1}$ and
$y_{m}$ is adjacent to $y_{m+1}$. By the vertex-transitivity of $G$,
we know that there are at least two adjacent quadrangles containing
$y_{m}$. Then by a simple check, we can obtain that $x_{m+1}$ and
$y_{m+1}$ should be adjacent, then we find a subgraph $K_{2}\times
P_{m+1}$ in $G$, which contradicts our selection.



\begin{figure}[!htbp]
\begin{center}
\includegraphics[totalheight=2.8cm]{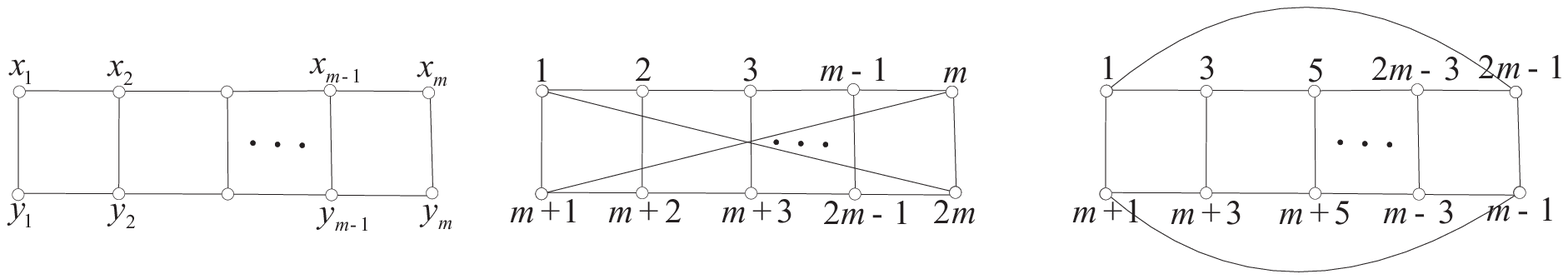}
 \caption{\label{34}}
\end{center}
\end{figure}

If $G$ does not contain adjacent quadrangles, then every vertex lies
in exactly one quadrangle. Suppose to the contrary that
$g_{o}(G)\geq \frac{|G|}{2}+1$. Let $C$ be a minimum odd cycle with
length $g_{o}(G)$. We shall obtain a contradiction by proving that
there are at least $|C|$ vertices in $V(G)\setminus V(C)$. Since $G$
is not $K_{4}$ or $K_{6}$ (by the hypothesis that $G$ is of girth 4)
and $C$ is minimum, by Lemma \ref{adjacenttoatmosttwo}, any vertex
in $V(G)\setminus V(C)$ is adjacent to at most two vertices in
$V(C)$.


Since every vertex lies in a quadrangle and $C$ is minimum, each
quadrangle containing vertices in $V(C)$ can only contain one or two
edges in $C$. Denote $G'$ by the subgraph of $G$ induced by the
edges in $C$ and the quadrangles having a non-empty intersection
with $C$ (see Figure \ref{G1} (left)). For any vertex $a\in V(C)$,
if it is of degree 2 in $G'$, then we claim that all its neighbors
in $G$ cannot lie entirely in $V(G')$. Suppose not. Since $C$ is
minimum, $C$ has no chords, we obtain the two structures shown in
Figure \ref{G1} (the middle and right ones). For each case, by Lemma
\ref{adjacenttoatmosttwo}, $v$, $a$ and $b$ lie in a quadrangle.
Then we obtain two adjacent quadrangles, a contradiction.

\begin{figure}[!htbp]
\begin{center}
\includegraphics[totalheight=5cm]{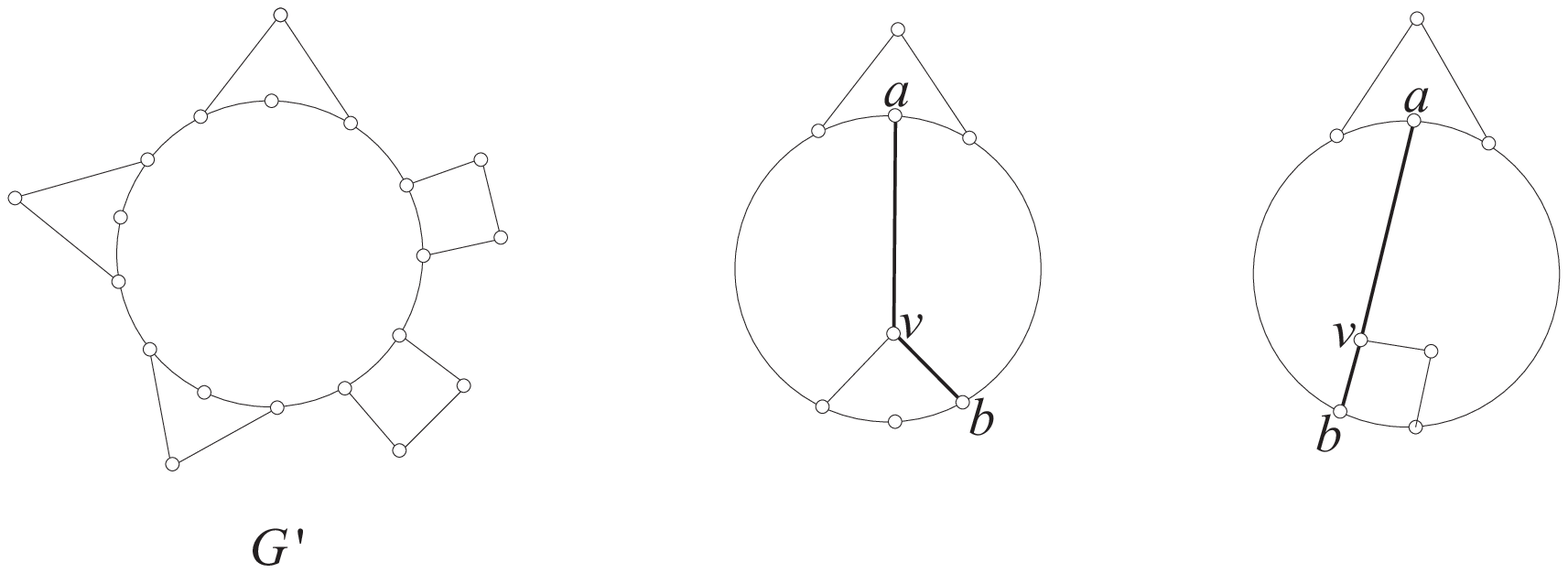}
 \caption{\label{G1}}
\end{center}
\end{figure}

In $G'$, for all the degree-2 vertices in $V(C)$, we collect the
edges incident to them in $E(G)\setminus E(G')$, denoted by $F$ (the
bold edges in Figure \ref{G2} (left)). We claim that $F$ is a
matching. If not, then we obtain a vertex $v$ adjacent to two
vertices $a$ and $b$ in $V(C)$. By Lemma \ref{adjacenttoatmosttwo},
$v$, $a$ and $b$ lie in a quadrangle. Then we see that both $a$ and
$b$ lie in two quadrangles, a contradiction.

Denote $G''$ by the subgraph induced by $E(G')\cup F$ in $G$, and
denote the set of degree-1 vertices in $G''$ be $A$. It is obvious
that $|A|=|F|$. We are going to show that there are at least $|A|$
vertices in $V(G)\setminus V(G'')$. If so, then by a simple
computation (for the vertices in $V(C)$ lying in the quadrangles
sharing exactly one edge with $E(C)$, there is the same number of
vertices in the quadrangles in $V(G)\setminus V(C)$; for the
vertices in $V(C)$ lying in the quadrangles sharing exactly two
edges with $E(C)$, by counting the vertices in quadrangles not in
$V(C)$, the vertices contained in $F$ not in $V(C)$ and the vertices
in $V(G)\setminus V(G'')$, we obtain the same conclusion too), there
are at least $|C|$ vertices in $V(G)\setminus V(C)$, which
contradicts that $C$ is of length at least $\frac{|G|}{2}+1$.

\begin{figure}[!htbp]
\begin{center}
\includegraphics[totalheight=5cm]{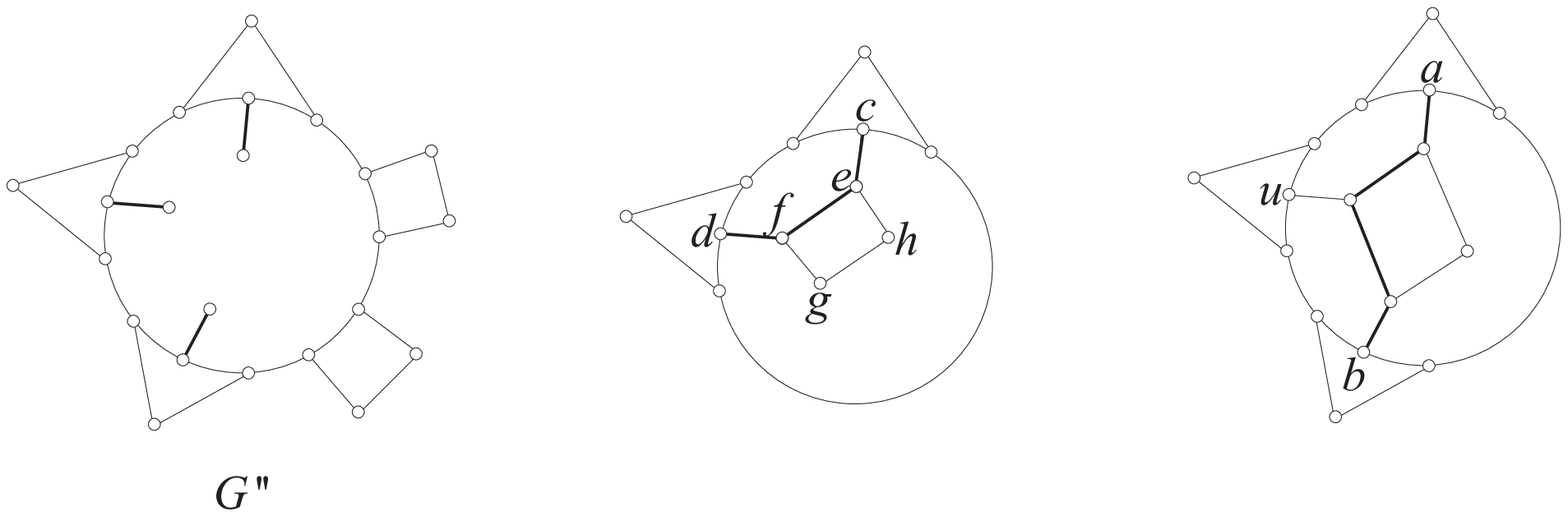}
 \caption{\label{G2}}
\end{center}
\end{figure}

We are left to prove that there are at least $|A|$ vertices in
$V(G)\setminus V(G'')$. For any vertex $v\in A$, let the quadrangle
containing it be $Q_{v}$. If $Q_{v}$ contains three vertices in
$V(G)\setminus V(G'')$, then any vertex $u$ in $A$ other than $v$
cannot be adjacent to any vertex in this quadrangle (otherwise,
$Q_{u}$ and $Q_{v}$ are adjacent, a contradiction), and we count
three for $V(G)\setminus V(G'')$; If the quadrangle containing it
contains exactly two vertices $g$ and $h$ in $V(G)\setminus V(G'')$
(see the middle one in Figure \ref{G2}), similarly, any vertex in
$A$ other than $e$ and $f$ can not be adjacent to $g$ and $h$ (if
there does exist such a vertex $w$, then by considering the
quadrangle containing $w$, we obtain two adjacent quadrangles, a
contradiction), then we count two for these two vertices $e$ and $f$
in $A$. Moreover, since $C$ is a minimum odd cycle, by a similar
argument as above, we can deduce that $||P_{cd}||=3$ or
$||P_{dc}||=3$; If the quadrangle containing it contains at most one
vertex in $V(G)\setminus V(G'')$ (see the right one in Figure
\ref{G2}), then we can deduce that $||P_{bu}||=||P_{ua}||=3$. By
substituting the edges in $P_{ba}$ in $C$ with the bold edges, we
obtain an odd cycle of length smaller than $C$, which contradicts
that $C$ is minimum. Therefore, there are at least $|A|$ vertices in
$V(G)\setminus V(G'')$. This completes the proof.
\end{proof}

Now everything is ready to prove the following key lemma which
characterizes the vertex-transitive graphs with provided structure.

\begin{Lemma}\label{structure}
Let $G$ be a $k$-regular connected non-bipartite vertex-transitive
graph. Let $S\subseteq V(G)$ with $|S|=|\overline{S}|+2$, where
$\overline{S}=V(G)\setminus S$, and $\overline{S}$ is an independent
set of $G$. Then $G$ is isomorphic to $Z_{4n+2}(1,4n+1,2n, 2n+2)$ or
$Z_{4n}(1,4n-1,2n)$ or $Z_{4n+2}(2,4n,2n+1)$ or the Petersen graph.
\end{Lemma}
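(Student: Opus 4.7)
The plan is to convert the hypotheses on $S$ into a sharp edge-count inside $S$, combine it with non-bipartiteness to constrain the odd girth of $G$, and then feed the resulting information into the structural lemmas of Section~2 to identify $G$ as one of the four listed graphs.

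First I would record the tight counting identity. Since $\overline{S}$ is independent and $G$ is $k$-regular, every edge incident to $\overline{S}$ lies in the cut $[S,\overline{S}]$, so this cut has size $k|\overline{S}|$. Combined with $|S|=|\overline{S}|+2$ this gives
\[
2\,\|G[S]\| = k|S|-k|\overline{S}| = 2k,
\]
so $G[S]$ has \emph{exactly} $k$ edges. This is the quantitative backbone of the argument: essentially all edges of $G$ cross between $S$ and $\overline{S}$, and the mere $k$ edges inside $S$ carry all of the odd-cycle information of $G$. It also has the useful consequence that in any shortest odd cycle $C$, writing $a=|V(C)\cap S|$ and $b=|V(C)\cap\overline{S}|$, $C$ contains exactly $a-b$ edges of $G[S]$ and $a-b\geq 1$ is odd.

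I would then run a case analysis driven by the girth of $G$. If $G$ contains a $k$-clique, Theorem~\ref{kedgeconnected} forces $G$ to arise by $k$-clique insertion at each vertex of a smaller vertex- and edge-transitive graph $G_{0}$; the small-$n$ cases ($K_{4}$ and the triangular prism, i.e.\ $Z_{4}(1,3,2)$ and $Z_{6}(2,4,3)$) are the $n=1$ members of $Z_{4n}(1,4n-1,2n)$ and $Z_{4n+2}(2,4n,2n+1)$, and the tight count $\|G[S]\|=k$ rules out larger $G_{0}$. Otherwise $G$ has no $k$-cliques, so by Theorem~\ref{superlambda} $G$ is super-$\lambda$. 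In the sub-case $k=3$ with $g(G)=4$ and adjacent quadrangles, Lemma~\ref{k3g4} directly identifies $G$ as $Z_{4n}(1,4n-1,2n)$ or $Z_{4n+2}(2,4n,2n+1)$ with $n\geq 2$. The remaining possibilities are $k=3$ with no adjacent quadrangles or $k=4$; in both regimes Lemma~\ref{adjacenttoatmosttwo} sharply restricts how any vertex of $\overline{S}$ can attach to the minimum odd cycle, and combining that rigidity with $\|G[S]\|=k$ and vertex-transitivity should force $G$ to be the Petersen graph (when $k=3$) or $Z_{4n+2}(1,4n+1,2n,2n+2)$ (when $k=4$).

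The main obstacle will be this last regime, where every sporadic vertex-transitive candidate other than Petersen and the named $4$-regular circulant has to be excluded, and in particular where we must show that $k\geq 5$ is impossible. Bounding $k$ a priori is a first technical hurdle: intuitively, $k$-regularity coupled with only $k$ internal edges in a set of size $|V(G)|/2+1$ forces the bipartite side to be so dense that transitivity collapses the example, but making this precise requires careful tracking of neighbourhoods along the shortest odd cycle. Throughout, the identity $\|G[S]\|=k$ is the main lever, and exploiting it via vertex-transitivity to control local neighbourhoods and the global odd girth simultaneously is the heart of the argument.
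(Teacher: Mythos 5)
Your counting identity $2\|G[S]\| = k|S| - k|\overline{S}| = 2k$ is correct and is genuinely the right starting observation (the paper uses its $k=3$ instance, ``$G[S]$ contains three edges,'' in its girth-$\geq 6$ subcase), but the proposal has a genuine gap at exactly the point you flag as a ``technical hurdle'': you never establish that $k\leq 4$, and you have no route to it. The paper's entire argument hinges on a step you are missing, namely the lower bound $g_{o}(G)\geq \frac{|G|}{2}$ on the odd girth. This is obtained by a double count of incidences between vertices and \emph{minimum odd cycles}: by vertex-transitivity every vertex lies on the same number $v_{l}$ of them, and since $\overline{S}$ is independent each such cycle meets $\overline{S}$ in at most $\frac{l-1}{2}$ vertices and $S$ in at least $\frac{l+1}{2}$; combining the two resulting inequalities with $|S|=|\overline{S}|+2$ gives $v_{l}\geq \frac{n_{l}}{2}$ and hence $l\geq \frac{|G|}{2}$. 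Without this bound you cannot even invoke Lemma~\ref{adjacenttoatmosttwo}, whose hypothesis is precisely $|C|\geq \frac{|G|}{2}$, so the ``rigidity'' you plan to exploit is not available. With it, the bound $k\leq 4$ falls out cleanly: a chordless minimum odd cycle $C$ emits $l(k-2)$ edges, each outside vertex absorbs at most two of them, so $|G|-l\geq \frac{l(k-2)}{2}\geq \frac{|G|(k-2)}{4}$ while $|G|-l\leq \frac{|G|}{2}$, forcing $k\leq 4$. Your proposed lever $\|G[S]\|=k$ does not substitute for this, because $G$ is not edge-transitive and you cannot control how many minimum odd cycles pass through those $k$ edges.

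Beyond that, the endgame is asserted rather than argued: the identification of $Z_{4n+2}(1,4n+1,2n,2n+2)$ when $k=4$ requires showing that deleting any minimum odd cycle leaves another minimum odd cycle and then propagating the quadrangle structure around the two cycles; the cubic case with girth $3$ or $5$ is handled in the paper not by clique-insertion bookkeeping but by noting $|G|\leq 2l\leq 10$ and consulting the census of cubic vertex-transitive graphs; and excluding girth $\geq 6$ for $k=3$ needs a separate multi-case analysis of how the three edges of $G[S]$ sit relative to two disjoint minimum odd cycles. Your plan names the correct target graphs and the correct auxiliary lemmas, but the two load-bearing steps --- the odd-girth lower bound and the degree bound it yields --- are absent, so as written this is an outline rather than a proof.
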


\begin{proof} Obviously, $\overline{S}\neq \emptyset$.
Let $C$ be a minimum odd cycle and $l=|C|$.

{\bf {Claim }1.} $l=g_{o}(G)\geq \frac{|G|}{2}$.

Suppose that there are $n_{l}$ minimum odd cycles in $G$ and each
vertex is contained in $v_{l}$ minimum odd cycles. Since
$G[\overline{S}]$ is empty, each minimum odd cycle contains at most
$\frac{l-1}{2}$ vertices in $\overline{S}$ and at least
$\frac{l+1}{2}$ vertices in $S$. By counting the number (repeated by
re-number calculation) of minimum odd cycles passing through the
vertices in $\overline{S}$, we have

$$v_{l}\times |\overline{S}|\leq \frac{l-1}{2}n_{l}.$$

Similarly for $S$, we have

$$v_{l}\times |S|\geq \frac{l+1}{2}n_{l}.$$

Combining the above two inequalities with $|S|=|\overline{S}|+2$, we
have $v_{l}\geq \frac{n_{l}}{2}$. Consequently, by counting the
number of times (repeated by re-number calculation) that the minimum
odd cycles passing through all the vertices in $G$, we obtain that
$n_{l}\times l =v_{l}\times |G|\geq \frac{n_{l}}{2}\times |G|$,
which implies $l\geq \frac{|G|}{2}$. So Claim 1 holds.

Now we show that only vertex-transitive graphs with small $k$
satisfy the conditions in the lemma.

{\bf {Claim }2.} $3\leq k \leq 4$.

We first show that $k \geq 3$. If not, then $G$ is an odd cycle,
which contradicts that $G$ is of even order
($|G|=2|\overline{S}|+2$).

Next we will show $k\leq 4$. Since $C$ is minimum, $C$ has no
chords. So there are $l(k-2)$ edges between $V(C)$ and
$\overline{V(C)}=V(G)\setminus V(C)$. If $G$ is isomorphic to
$K_{4}$, then $k=3\leq 4$.  $G$ is not isomorphic to $K_{6}$.
Otherwise for any $S\subseteq V(G)$ with $|S|=|\overline{S}|+2$, we
have $|\overline{S}|=2$ and hence $G[\overline{S}]$ is not empty,
which contradicts
 the hypothesis. Hence we only need to consider that $G$ is not isomorphic to
 $K_{4}$ or $K_{6}$. By Lemma
\ref{adjacenttoatmosttwo}, any vertex in $\overline{V(C)}$ can only
be adjacent to at most two vertices in $V(C)$. So there are at least
$\frac{l(k-2)}{2}$ vertices in $\overline{V(C)}$.

By Claim 1, $|C|=l\geq \frac{|G|}{2}$. On one hand,
$|\overline{V(C)}|\geq \frac{l(k-2)}{2}\geq
\frac{|G|\times(k-2)}{4}$. On the other hand,
$|\overline{V(C)}|=|G|-l\leq \frac{|G|}{2}$. Combining these two
inequalities, that is, $\frac{|G|\times(k-2)}{4}\leq
|\overline{V(C)}| \leq \frac{|G|}{2}$, we have $k\leq 4$.


In the following, we divide the remaining proof into two cases:
 $k=4$ and $k=3$.

 {\bf{Case 1.}}  $k=4$.

 \begin{figure}[!htbp]
\begin{center}
\includegraphics[totalheight=5.8cm]{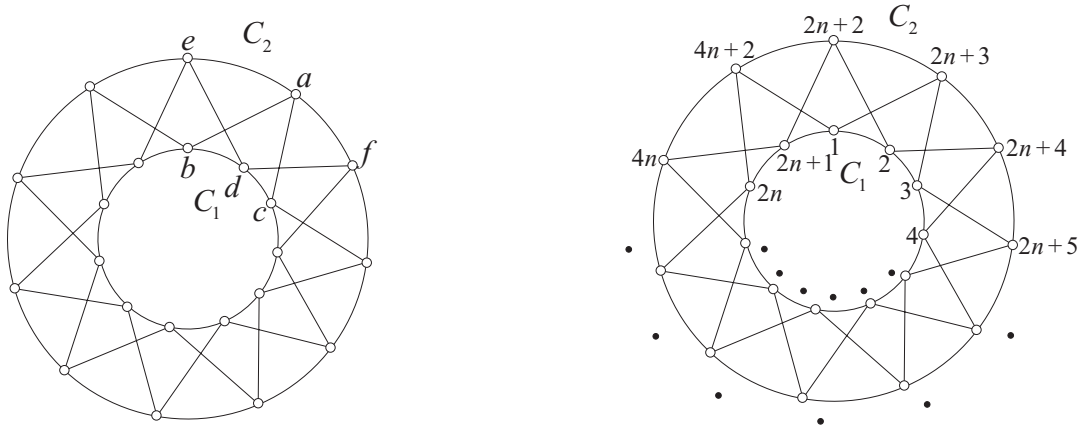}
 \caption{$Z_{4n+2}(1,4n+1,2n,
2n+2)$ and the labeling of it. \label{k4g4}}
\end{center}
\end{figure}

For any minimum odd cycle $C$, by the above proof,
$\frac{|G|}{2}\leq \frac{|G|\times(k-2)}{4}\leq |\overline{V(C)}|
\leq \frac{|G|}{2}$. Thus all equalities hold, that is,
$l=\frac{|G|}{2}$ and every vertex in $\overline{V(C)}$ is adjacent
to exactly two vertices in $V(C)$. So every vertex in
$\overline{V(C)}$ is of degree 2 in $G[\overline{V(C)}]$, the
subgraph of $G$ induced by $\overline{V(C)}$, and hence
$G[\overline{V(C)}]$ is a union of disjoint cycles. By
$|\overline{V(C)}|=|C|=l$ and $C$ is minimum, $G[\overline{V(C)}]$
is indeed a minimum odd cycle. Therefore, we conclude that the
deletion of any minimum odd cycle results in another minimum odd
cycle.


Let $C_{1}$ be a minimum odd cycle and $C_{2}$ be
 the minimum odd cycle of $G$ by deleting $V(C_{1})$ from it, we draw $C_{1}$ and $C_{2}$
 on the plane as in Figure \ref{k4g4}.
 Suppose that $a\in V(C_{2})$ is adjacent to $b$ and $c$ in $V(C_{1})$. Then by Lemma
 \ref{adjacenttoatmosttwo}, either $P_{cb}$ or $P_{bc}$ in $C_{1}$ is of length 2, we may assume that
 $P_{bc}$. The edges of $P_{cb}$ in $C_{1}$, $ca$ and $ab$ form a minimum odd cycle. By
deleting it, we obtain another minimum odd cycle. That is, $d$
should be adjacent to $e$ and $f$, where $e$ and $f$
 are neighbors of $a$ in $C_{2}$.
 By continuing this process
repeatedly, we obtain $G$ is isomorphic to the graph shown in Figure
\ref{k4g4} (left), by labeling it as shown in Figure \ref{k4g4}
(right), we can see $G$ is isomorphic to $Z_{4n+2}(1,4n+1,2n, 2n+2)$
for some integer $n$.

{\bf {Case }2.} $k=3$.

For $g(G)=3$ or $5$, Claim 1 implies that $|G|\leq 2l\leq 10$. Read
and Wilson \cite{Read98} have enumerated all connected cubic
vertex-transitive graphs on 34 and fewer vertices, from which, we
deduce that all connected non-bipartite cubic vertex-transitive
graphs of at most 10 vertices are either $Z_{4n}(1,4n-1,2n)$ or
$Z_{4n+2}(2,4n,2n+1)$ with $n= 1$ and 2, or the Petersen graph. We
are done.

For $g(G)=4$, $G$ has adjacent quadrangles. Otherwise, $|G|=4m$ for
some integer $m$. But Claim 1 and Lemma \ref{k3g4} imply
$l=\frac{|G|}{2}$ and $|G|=4t+2$ for some integer $t$, a
contradiction.  By Lemma \ref{k3g4}, $G$ is isomorphic to a graph in
$Z_{4n}(1,4n-1,2n)$ or $Z_{4n+2}(2, 4n, 2n+1)$ for some integer $n$.

{\bf{Claim 3.}} $g(G)\leq 5$. Suppose to the contrary that $g(G)\geq
6$.

There is no vertex in $\overline{V(C)}$ adjacent to two vertices in
$V(C)$. If not, then by Lemma \ref{adjacenttoatmosttwo}, we obtain a
quadrangle, a contradiction. Thus the edges sending out from $V(C)$
form a matching. That is, there are at least $l$ vertices in
$\overline{V(C)}$. By Claim 1, $l\geq \frac{|G|}{2}$, we obtain
$l=\frac{|G|}{2}$. Now, we focus on the graph
$G'=G[\overline{V(C)}]$. Since $|G'|$ is odd and each vertex in $G'$
is of degree 2, $G'$ is a union of disjoint cycles. Further, because
$l$ is the length of minimum odd cycles, $G'$ is a cycle of length
$l$.

By the above arguments, we conclude that

(i) $V(G)$ can be decomposed into two parts $V_{1}$ and $V_{2}$ such
that both $G[V_{1}]$ and $G[V_{2}]$ are minimum odd cycles.

(ii) The deletion of any minimum odd cycle from $G$ results in a
minimum cycle, too.

Since $\overline{S}$ is an independent set of $G$, by a simple
computation, we have that $G[S]$ contains three edges, and every odd
cycle
 contains either one or three edges in $G[S]$. Hence the following holds.

(iii) Every minimum odd cycle contains exactly one edge in $G[S]$.
(Otherwise, suppose there is a minimum odd cycle containing three
edges in $G[S]$. By (ii), the deletion of it results in a minimum
odd cycle which contains no edges in $G[S]$, a contradiction.)

Denote the three edges in $G[S]$ by $e_{1}, e_{2}$ and $e_{3}$. Then
by (i) and (iii), we may assume that  $C_{1}$ and $C_{2}$ are two
vertex-disjoint minimum odd cycles containing $e_{1}$ and $e_{2}$
respectively. Also by (iii), $e_{3}$ does not lie in $C_{1}$ or
$C_{2}$. For the simplicity of description, we color the vertices in
$S$ white and the vertices in $\overline{S}$ black. So two white
vertices are adjacent if and only if they are the end-vertices of
some $e_{i}$, for $i=1, 2$ or 3. There are three cases to consider.

 \begin{figure}[!htbp]
\begin{center}
\includegraphics[totalheight=4.5cm]{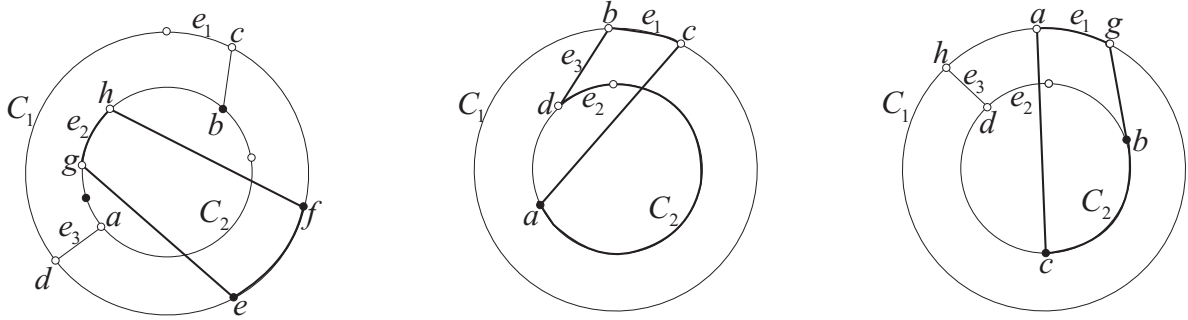}
 \caption{The illustration of proof of Case 2. \label{k3g6l72}
}
\end{center}
\end{figure}

{\bf{Subcase 2.1.}} $e_{1}$, $e_{2}$ and $e_{3}$ are independent.

We draw $C_{1}$ and $C_{2}$ on the plane as shown in Figure
\ref{k3g6l72} (left). Let $c$ be an end-vertex of $e_{1}$, $b$ the
neighbor of $c$ in $V(C_{2})$ and $a$ the end-vertex of $e_{3}$ in
$V(C_{2})$. We may assume that $e_{2}$ lies on $P_{ab}$ in $C_{2}$.
Otherwise we may redraw $C_{2}$ by interchanging $P_{ab}$ and
$P_{ba}$. The union of the edges of $P_{ab}$ in $C_{2}$, $e_{3}$,
the edges of $P_{dc}$ in $C_{1}$ and $cb$ form a cycle, denoted by
$C_{3}$. Since $C_{3}$ contains three edges in $G[S]$, it is an odd
cycle of length at least $l+2$. Simultaneously, the union of the
edges of $P_{ba}$ in $C_{2}$, $e_{3}$, the edges of $P_{cd}$ in
$C_{1}$ and $cb$ form a cycle, denoted by $C_{4}$. Since $C_{4}$
contains exactly one edge in $G[S]$, it is an odd cycle of length at
least $l$. Clearly, we have $|C_{1}|+|C_{2}|+4=|C_{3}|+|C_{4}|$.
That is, $|C_{3}|+|C_{4}|=2l+4$. Hence $l+2\leq |C_{3}|\leq l+4$ by
$|C_{4}|\geq l$.

Now we show that $C_{3}$ has no chords. Suppose by the contrary that
$C_{3}$  has a chord. Then by using the chord and $E(C_{3})$, we get
two cycles, one is of odd length denoted by $C_{5}$ and one is of
even length. By $g\geq 6$, the one with even length is of length at
least 6 and $C_{5}$ is of length at least $l$. Since the sum of the
lengths of these two cycles is at most $l+6$, we obtain that $C_{5}$
is a minimum odd cycle and the other even cycle is of length 6. By
deleting $V(C_{5})$ from $G$, we can see the resulting graph has at
least one vertex in $\{a, b, c, d\}$ of degree 3 and further cannot
be a cycle, which contradicts (ii).

Since $C_{3}$ has no chords and also $C_{2}$ has no chords, the
end-vertices of $e_{2}$ are adjacent to vertices in
$V(C_{1})\setminus V({C_{3}})$, denoted by $e$ and $f$. Then the
bold edges in Figure \ref{k3g6l72} (left) form a new cycle $C_{6}$.
Since $C_{6}$ contains only one edge in $G[S]$, it is an odd cycle.
Recall that two white vertices are adjacent if and only if they are
the end-vertices of some $e_{i}$, $P_{ef}$ in $C_{1}$ are of length
at least five. $||P_{ef}||+||P_{fe}||=l$, so for $C_{1}$, by
substituting the edges in $P_{ef}$ with the bold edges $eg$, $e_{2}$
and $hf$, we obtain that $C_{6}$ is an odd cycle of length smaller
than $l$, a contradiction.

{\bf{Subcase 2.2.}}  $e_{1}$, $e_{2}$ and $e_{3}$ form a path of
length three (see Figure \ref{k3g6l72} (middle)).

Similarly to Subcase 2.1, we may assume that $e_{1}$, $e_{2}$ and
$e_{3}$ are placed exactly like in Figure \ref{k3g6l72} (middle).
Suppose that $c$ is adjacent to a vertex $a\in V(C_{2})$.
We consider the cycle (the bold edges in Figure \ref{k3g6l72}
(middle)) consisting of the  edges of $P_{da}$ in $C_{2}$, $ac$,
$e_{1}$ and $e_{3}$. This cycle, denoted by $C_{7}$, contains
exactly three edges in $G[S]$, so it is an odd cycle and of length
at least $l+2$ by (iii). Hence $P_{da}$ in $C_{2}$ is of length at
least $l-1$ and further $da$ is an edge. Then the four edges $da,
ac, e_{1}, e_{3}$ form a quadrangle, which contradicts that
$g(G)\geq 6$.


{\bf{Subcase 2.3.}}  $e_{1}$, $e_{2}$ and $e_{3}$ form a union of an
independent edge and a path of length two (see Figure \ref{k3g6l72}
(right)).

Similarly to Subcase 2.1, we may assume that $e_{1}$, $e_{2}$ and
$e_{3}$ are placed exactly like in Figure \ref{k3g6l72} (right).
Suppose that the neighbor of $g$ in $V(C_{2})$ be $b$ and the
neighbor of $a$ in $V(C_{2})$ be $c$. Let $C_{8}$ be the cycle
formed by the edges of $P_{db}$ in $C_{2}$, $bg$, the edges of
$P_{hg}$ in $C_{1}$ and $hd$. By the similar argument as for $C_{3}$
in Subcase 2.1, we know that $C_{8}$ has no chords. Hence $c$ lies
in $P_{bd}$ in $C_{2}$. Let $C_{9}$ (the bold edges in Figure
\ref{k3g6l72} (right)) be the cycle formed by the edges of $P_{bc}$
in $C_{2}$, $ca$, $e_{1}$ and $gb$. Since $P_{cb}$ in $C_{2}$ is of
length at least three, by a similar argument as above, we get that
$C_{9}$ is a minimum odd cycle. By (ii), the deletion of $C_{9}$
results in a minimum odd cycle having no vertices of degree 3. Hence
$ha$ is an edge, but $h$ and $a$ receive the same white color, a
contradiction.
\end{proof}

\section{Matching Preclusion}

In this section, we shall prove Theorem \ref{mainresult}. We first
present the Plesn\'{i}k's Theorem which is used to estimate the
lower bound of matching preclusion number.

\begin{thm}[\cite{LovaszandPlummer86}]\label{plesnik}
If $G$ is a $k$-regular $(k-1)$-edge-connected graph of even order,
then $G-F$ has a perfect matching for every $F\subseteq E(G)$ with
$|F|\leq k-1$.
\end{thm}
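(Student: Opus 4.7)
The plan is to apply Tutte's theorem to $G-F$ and derive a contradiction via a parity-boosted edge count. Assume for contradiction that $|F|\leq k-1$ but $G-F$ has no perfect matching; then Tutte's theorem produces a set $S\subseteq V(G)$ with $o((G-F)-S)>|S|$, where $o(\cdot)$ denotes the number of odd components. Because $|V(G)|$ is even, the two sides share a parity, so in fact $o((G-F)-S)\geq |S|+2$. Let $C_{1},\dots,C_{m}$ be the odd components of $(G-F)-S$, with $m\geq |S|+2$.

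The core step is to estimate, for each $i$, the number $d_{i}$ of edges of $G$ (not $G-F$) with exactly one endpoint in $C_{i}$. Since $m\geq 2$, each $C_{i}$ is a proper nonempty subset of $V(G)$, so $(k-1)$-edge-connectivity gives $d_{i}\geq k-1$. A parity argument then lifts this to $d_{i}\geq k$: the identity $k|C_{i}|=2|E(G[C_{i}])|+d_{i}$, together with $|C_{i}|$ odd, yields $d_{i}\equiv k\pmod{2}$, which rules out $d_{i}=k-1$ since $k-1$ and $k$ have opposite parities. Summing over $i$, $\sum_{i}d_{i}\geq km$.

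For a matching upper bound, I would observe that every edge counted by $\sum_{i}d_{i}$ either has its other endpoint in $S$ or lies in $F$; otherwise it would belong to $G-F$ and join two distinct components of $(G-F)-S$, a contradiction. Edges leaving $\bigcup_{i}C_{i}$ into $S$ contribute at most $k|S|$ in total (bounded by the sum of degrees over $S$), while edges of $F$ contribute at most $2|F|$ (each such edge is counted at most twice, when its endpoints lie in two different $C_{i}$'s). Thus $km\leq k|S|+2|F|$, and substituting $m\geq |S|+2$ yields $2k\leq 2|F|$, i.e.\ $|F|\geq k$, contradicting $|F|\leq k-1$.

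The main obstacle, and the step where $k$-regularity is genuinely used beyond just ``minimum degree $k$'', is the parity bump from $d_{i}\geq k-1$ to $d_{i}\geq k$. Without it the final inequality collapses to the tautology $|F|\geq k-1$ and yields no contradiction; with it, the hypothesis $|F|\leq k-1$ is exactly the cutoff below which Tutte's condition cannot be violated.
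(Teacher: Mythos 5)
The paper does not prove this statement; it is quoted as a known result (Plesn\'{i}k's theorem) from Lov\'{a}sz and Plummer, so there is no in-paper proof to compare against. Your argument is correct and complete, and it is essentially the standard proof: Tutte's theorem plus the parity upgrade $d_i\geq k-1 \Rightarrow d_i\geq k$ coming from $d_i\equiv k|C_i|\equiv k\pmod 2$, followed by the double count $km\leq k|S|+2|F|$. Each step checks out: $m\geq|S|+2$ follows from the even order of $G$; $m\geq 2$ guarantees each $C_i$ is a proper nonempty subset so that $(k-1)$-edge-connectivity applies; and an edge leaving some $C_i$ whose other end is not in $S$ must lie in $F$ and is counted at most twice. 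It is worth noting that the paper re-runs exactly this counting scheme (components sending out at least $k$ edges, yielding $kc_{o}(G-F-S)-2k\leq N\leq k|S|$) inside Case 2 of the proof of Theorem \ref{mainresult}, so your write-up is fully consistent with the techniques the authors rely on.
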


For the classification of optimal solutions, Hall's Theorem (for the
bipartite case) and Tutte's Theorem (for the non-bipartite case) are
used.

\begin{thm}[Hall's Theorem \cite{Hall35}] \label{hall}
Let $G$ be a bipartite graph with bipartition $W$ and $B$.
Then $G$ has a perfect matching if and only if $|W|=|B|$ and
for any $U\subseteq W$, $|N(U)|\geq |U|$ holds.
\end{thm}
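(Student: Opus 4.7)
The plan is to establish Hall's Theorem in the standard way: the forward direction is essentially immediate, while the converse is proved by induction on $|W|$, splitting into two cases depending on whether any proper non-empty subset of $W$ is "tight" for the defect inequality.

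First I would dispense with the forward direction. If $M$ is a perfect matching, then $M$ pairs up the vertices of $W$ with those of $B$, forcing $|W|=|B|$; moreover, for every $U\subseteq W$ the matching partner of each vertex in $U$ lies in $N(U)$, giving $|N(U)|\ge|U|$. This needs no further work.

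For the converse, I would induct on $n=|W|$. The base case $n=1$ is immediate: Hall's condition applied to $U=W$ gives $|N(W)|\ge1$, so a single edge is available, and $|B|=1$ forces it to be a perfect matching. For the inductive step, assume the result for all bipartite graphs whose $W$-part has fewer than $n$ vertices, and consider two cases.

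\textbf{Case A (no tight proper subset).} Suppose every non-empty $U\subsetneq W$ satisfies the strict inequality $|N(U)|>|U|$. Pick any edge $wb$ (which exists because $|N(\{w\})|\ge1$), delete the vertices $w$ and $b$, and let $G'$ be the remaining bipartite graph. For any non-empty $U'\subseteq W\setminus\{w\}$ we have $|N_{G'}(U')|\ge|N_G(U')|-1>|U'|-1$, hence $|N_{G'}(U')|\ge|U'|$; the empty case is trivial. So Hall's condition holds for $G'$, and by the inductive hypothesis $G'$ has a perfect matching, which together with $wb$ yields a perfect matching of $G$.

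\textbf{Case B (some tight proper subset).} Suppose there is a non-empty $U\subsetneq W$ with $|N(U)|=|U|$. Consider the bipartite subgraph $G_1$ on $U\cup N(U)$: for any $U'\subseteq U$, neighbours of $U'$ in $G_1$ coincide with $N_G(U')\subseteq N(U)$, so $|N_{G_1}(U')|=|N_G(U')|\ge|U'|$. Since $|U|<n$, the inductive hypothesis produces a perfect matching $M_1$ of $G_1$. Now let $G_2$ be the bipartite graph on $(W\setminus U)\cup(B\setminus N(U))$. For $U''\subseteq W\setminus U$, observe $N_{G_2}(U'')=N_G(U'')\setminus N(U)$, so
\[
|N_{G_2}(U'')|=|N_G(U\cup U'')|-|N(U)|\ge|U\cup U''|-|U|=|U''|,
\]
giving Hall's condition in $G_2$. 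As $|W\setminus U|<n$, induction yields a perfect matching $M_2$ of $G_2$, and $M_1\cup M_2$ is a perfect matching of $G$.

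The only step that requires genuine care is the verification of Hall's condition in the two reduced graphs of Case B, since one must rewrite $N_{G_2}(U'')$ in terms of $N_G(U\cup U'')$ to exploit the original hypothesis; everything else is bookkeeping. Because the statement is a classical theorem, I do not expect any deeper obstacle, and the induction closes cleanly once these two reductions are in place.
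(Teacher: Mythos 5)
Your argument is correct: it is the classical Halmos--Vaughan induction proof of Hall's Theorem, and both cases close as you describe (the key identity $N_{G_2}(U'')=N_G(U\cup U'')\setminus N(U)$ together with $N(U)\subseteq N_G(U\cup U'')$ is exactly what makes Case B work). Note, however, that the paper offers no proof of this statement at all --- it is quoted as a classical result with a citation to Hall's 1935 article and then used as a tool in the classification of optimal matching preclusion sets --- so there is no argument in the paper to compare yours against; your proof simply supplies what the authors deliberately treat as a black box. Two trivial points you leave implicit but should state for completeness: in each reduced graph ($G'$ in Case A, and $G_1$, $G_2$ in Case B) the two sides of the bipartition remain equal in size (this follows from $|W|=|B|$ and, in Case B, from the tightness $|N(U)|=|U|$), which is needed because your inductive hypothesis is the full statement including the condition $|W|=|B|$; and in Case A the subset $U'=W\setminus\{w\}$ is indeed a \emph{proper} nonempty subset of $W$ when $n\geq 2$, so the strict inequality may legitimately be applied to it.
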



\begin{thm}[Tutte's Theorem \cite{Tutte47}] A graph $G$ has a
perfect matching if and only if $c_{o}(G-U) \leq |U|$ for any $U \subseteq V(G)$, where
$c_{o}(G-U)$ is the number of odd components of $G-U$.
\end{thm}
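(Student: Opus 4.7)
The plan is to fix a minimum odd cycle $C$ of length $l=g_o(G)$ and use the hypothesis that $\overline{S}$ is an independent set of size $(|G|-2)/2$ to force $l$, $k$, and ultimately the whole structure of $G$ to be very restricted. First I would establish the bound $l\geq |G|/2$ by a double counting argument: since $\overline{S}$ is independent, any odd cycle contains at most $(l-1)/2$ of its vertices in $\overline{S}$ and at least $(l+1)/2$ in $S$. Letting $n_l$ be the number of minimum odd cycles and $v_l$ the (common, by vertex-transitivity) number of minimum odd cycles through any vertex, I get $v_l|\overline{S}|\leq \frac{l-1}{2}n_l$ and $v_l|S|\geq \frac{l+1}{2}n_l$; combining with $|S|=|\overline{S}|+2$ yields $v_l\geq n_l/2$, and then $l\cdot n_l = v_l\cdot |G|$ forces $l\geq |G|/2$.

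Next I would prove $3\leq k\leq 4$. Since $G$ has even order and is non-bipartite, $k=2$ is ruled out. For the upper bound, because $C$ is a minimum odd cycle it has no chords, so exactly $l(k-2)$ edges join $V(C)$ to $\overline{V(C)}$. After discarding the easy cases $K_4,K_6$ (and noting $K_6$ is incompatible with the independence of $\overline{S}$ since then $|\overline{S}|=2$ and those two vertices must be adjacent), Lemma \ref{adjacenttoatmosttwo} gives each external vertex at most two neighbors on $C$, hence $|G|-l\geq l(k-2)/2$; combined with $l\geq |G|/2$, this forces $k\leq 4$.

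For $k=4$ all the above inequalities become equalities, so $l=|G|/2$ and every vertex outside $C$ has exactly two neighbors on $C$. Then $G[\overline{V(C)}]$ is $2$-regular of odd order $l$, and minimality of $l$ forces it to be a single minimum odd cycle. So $G$ is built from two disjoint minimum odd cycles joined by a perfect matching whose local geometry is controlled by the second clause of Lemma \ref{adjacenttoatmosttwo}: each crossing edge $av$ together with its partner across $C$ must close up a quadrangle with some $P_{uw}$ of length two, and deleting the resulting minimum odd cycle (which again, by the ``deletion of any minimum odd cycle'' observation, leaves another minimum odd cycle) propagates the pattern around both cycles. Iterating identifies $G\cong Z_{4n+2}(1,4n+1,2n,2n+2)$.

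For $k=3$ I would split on $g(G)$. If $g(G)\in\{3,5\}$, then $l\leq 5$ gives $|G|\leq 10$, and the Read--Wilson enumeration cited in the paper produces only $Z_{4n}(1,4n-1,2n)$ and $Z_{4n+2}(2,4n,2n+1)$ for $n=1,2$ and the Petersen graph. If $g(G)=4$, Lemma \ref{k3g4} directly delivers the two desired families once one rules out the ``no adjacent quadrangles'' alternative: that alternative partitions $V(G)$ into quadrangles (so $4\mid |G|$), but $l=|G|/2$ odd forces $|G|\equiv 2\pmod 4$. The serious obstacle is $g(G)\geq 6$. Here no external vertex can have two neighbors on $C$ (else a quadrangle appears), so the $l$ edges from $V(C)$ outward form a matching, forcing $l=|G|/2$ exactly, and $G[\overline{V(C)}]$ is likewise a minimum odd cycle; thus $V(G)=V_1\sqcup V_2$ with both $G[V_i]$ minimum odd cycles, and deleting any minimum odd cycle leaves another. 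A counting check then shows $G[S]$ has exactly three edges $e_1,e_2,e_3$, and by the ``delete a minimum odd cycle'' principle every minimum odd cycle contains exactly one $e_i$. I would finish by case-splitting on the isomorphism type of the subgraph $e_1\cup e_2\cup e_3$ (three independent edges; a $P_4$; or an independent edge together with a $P_3$), placing the $e_i$ on the two vertex-disjoint minimum odd cycles $C_1,C_2$, and in each configuration building a concrete shorter odd cycle or a forbidden quadrangle by chasing chords and edges between $C_1$ and $C_2$; this case analysis, showing that the combinatorics are actually inconsistent with $g(G)\geq 6$, is the main obstacle.
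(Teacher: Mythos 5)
Your proposal does not address the statement at all. The statement to be proved is Tutte's Theorem: a graph $G$ has a perfect matching if and only if $c_{o}(G-U)\leq |U|$ for every $U\subseteq V(G)$. This is a classical characterization of graphs with perfect matchings (the $1$-factor theorem), which the paper cites from the literature and does not reprove. A proof would need to establish the easy necessity direction (removing $U$ leaves each odd component with at least one vertex unmatched inside it, so it must be matched into $U$) and then the substantive sufficiency direction, typically via a maximal-counterexample or Gallai--Edmonds/alternating-path argument, or by deducing it from Hall's theorem applied to a suitable auxiliary bipartite graph.

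What you have written instead is a proof sketch of Lemma \ref{structure} of the paper: the classification of $k$-regular connected non-bipartite vertex-transitive graphs $G$ admitting a partition $V(G)=S\cup\overline{S}$ with $\overline{S}$ independent and $|S|=|\overline{S}|+2$, concluding that $G$ is one of $Z_{4n+2}(1,4n+1,2n,2n+2)$, $Z_{4n}(1,4n-1,2n)$, $Z_{4n+2}(2,4n,2n+1)$, or the Petersen graph. The double counting of minimum odd cycles, the bound $3\leq k\leq 4$, the girth case analysis, and the three subcases on the edges $e_{1},e_{2},e_{3}$ of $G[S]$ all belong to that lemma, and none of it engages with the Tutte condition $c_{o}(G-U)\leq |U|$ or with the existence of a perfect matching in an arbitrary graph. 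You should restart from the actual statement.
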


Now we are ready to prove Theorem \ref{mainresult}. 
Note that for the classification of optimal solutions of the
bipartite vertex-transitive graphs, the authors \cite{Cheng2012}
presented a sufficient condition for regular bipartite graphs to be
super matched with respect to the concept
``super edge-connected'', the method here is similar.\\

\noindent{\emph{\textbf{Proof of Theorem \ref{mainresult}}}}.   By
Lemma \ref{Mader71}, $G$ is $k$-edge-connected. Then by Theorem
\ref{plesnik}, $mp(G)\geq k$. Combining this with Theorem
\ref{upperbound0}, we obtain $mp(G)=k$. We are left to classify the
optimal solutions.


 Necessity. We prove by contradiction that if $G$ is isomorphic to one of the six classes, then it has
 a non-trivial optimal solution.

(a) $G$ contains a clique $S$ of size $k$ when $k$ is odd and $k\leq
|G|-2$. Since $k\leq |G|-2$, $\overline{S}$ consists of at least two
vertices. The edges between $S$ and $\overline{S}$ is a non-trivial
optimal solution.

(b) $G$ is isomorphic to a cycle of length at least six. Suppose
that $G=v_{1}v_{2}\dots v_{n}v_{1}$ with $n\geq 6$. Then the edge
set $\{v_{1}v_{2}, v_{4}v_{5}\}$ forms a non-trivial optimal
solution.

(c) $G$ is isomorphic to $Z_{4n}(1,4n-1,2n)$. Pick $F=\{12, 1(2n+1),
(2n+1)2n\}$ and $S=\{2i+1 | 1\leq i \leq n-1\}\cup \{2i | n+1\leq i
\leq 2n\}$. Then $G-F-S$ consists of  $2n+1$ isolated vertices. By
$|S|=2n-1$ and Tutte's Theorem, $G-F$ has no perfect matchings.
Hence $F$ is a non-trivial optimal solution.

(d) $G$ is isomorphic to $Z_{4n+2}(2,4n,2n+1)$. Let $F=\{(4n+1)1,
1(2n+2), (2n+2)(2n+4)\}$ and $S=\{4i-1| 1\leq i \leq n\}\cup
\{2n+2+4i | 1\leq i \leq n\}$. Similarly to (c), one can check that
$F$ is a non-trivial optimal solution.

(e) $G$ is isomorphic to  $Z_{4n+2}(1,4n+1,2n,2n+2)$. Make $F=\{12,
2(2n+2), (2n+2)(2n+3), (2n+3)1\}$ and $S=\{2i+1| 1\leq i\leq n\}\cup
\{2i | n+2\leq i \leq 2n+1\}$. Similarly to (c), one can check that
$F$ is a non-trivial optimal solution.

(f) $G$ is isomorphic to the Petersen graph. Let $F$ be the set of
three bold edges and $S$ the set of bold vertices as shown in Figure
\ref{petersengraph}. Similarly to (c), one can show $F$ is a
non-trivial optimal solution.

\begin{figure}[!htbp]
\begin{center}
\includegraphics[totalheight=4cm]{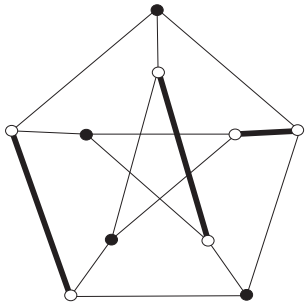}
 \caption{ The Petersen graph is not super matched. \label{petersengraph}}
\end{center}
\end{figure}

 Sufficiency. Let $F$ be an optimal solution. Then $|F|=mp(G)=k$ and $G-F$ has no
perfect matchings.  We shall show that $F$ isolates a singleton.
There are two cases to consider.

{\bf {Case }1.} $G$ is bipartite.

  We are to show that $F$ is an edge-cut.
  Assume that $W$ and $B$ are the bipartition of $G$. By Hall's theorem,
  there exists $S\subseteq W$ such that
 $|N_{G-F}(S)|\leq |S|-1$.
On the other hand, since $F$ is a matching preclusion set with the
smallest cardinality, for each edge $e\in F$, $G-F+e$ has perfect
matchings and also by Hall's Theorem, $|N_{G-F+e}(S)|\geq|S|$ holds.
Note that by adding one edge $e$ to $G-F$, the neighborhood of $S$
increases at most one vertex. Hence $|N_{G-F+e}(S)|\leq
|N_{G-F}(S)|+1$.

 Combining the above three inequations, we obtain that
 $|S|=|N_{G-F}(S)|+1$. Denote $S'=N_{G-F}(S)$.
 The edges sending out from $S$ are divided into two parts:
 One goes into $F$ and one goes into $S'$. Thus $S$ sends exactly $k|S|-|F|=k|S|-k$ edges to $S'$.
 Since $|S'|=|S|-1$, there are no edges connecting $S'$ to $W-S$.
 This implies that $F$ is an edge-cut.

If $k=1$, then $G$ is isomorphic to $K_{2}$ and $G$ is super
matched; If $G$ is a cycle of length four, then $G$ is super
matched; If $k\geq 2$, then since $G$ is bipartite, $G$ is
triangle-free, that is, $G$ is not isomorphic to a complete graph.
By hypothesis, $G$ is not isomorphic to a cycle of length at least
six. Therefore, by Theorem \ref{superlambda}, $F$ is a trivial
edge-cut, that is, $F$ isolates a singleton.

{\bf {Case 2.}}  $G$ is non-bipartite.

 By Tutte's Theorem, there exists $S\subseteq
V(G-F)$ such that $c_{o}(G-F-S)\geq |S|+2$. Now we count the number
$N$ of edges between $S$ and $\overline{S}$ in $G$. Since every
component of $G-S$ sends out at least $k$ edges, we have
$kc_{o}(G-F-S)-2k\leq N\leq k|S|,$ combining this with
$c_{o}(G-F-S)\geq |S|+2$, $k|S|\leq N\leq k|S|$ holds and further
$c_{o}(G-F-S)=|S|+2$. Hence every component sends out exactly $k$
edges, there are no even components in $G-F-S$ and each edge in $F$
connects two components in $G-F-S$.

If $|S|=0$, then there are exactly two (odd) components connected by
the edges in $F$. We claim one of them is a singleton. If not, then
by Corollary \ref{minedgecut}, $G$ contains a $k$-clique with $k$
odd and $k\leq |G|-2$, a contradiction.

If $|S|\neq0$, similarly,  by Corollary \ref{minedgecut}, each
component is a singleton. Consequently, $G$ satisfies the condition
in Lemma \ref{structure}. Hence $G\cong Z_{4n+2}(1,4n+1,2n, 2n+2)$
or $Z_{4n}(1,4n-1,2n)$ or $Z_{4n+2}(2,4n,2n+1)$ or the Petersen
graph, contradicting the hypothesis. \hspace*{4.2cm}{\large
$\square$}\\

Since the six classes of graphs in Theorem \ref{mainresult} are all
non-bipartite except for even cycles, and $Z_{4n+2}(1,4n+1,2n,
2n+2)$, $Z_{4n}(1,4n-1,2n)$, $Z_{4n+2}(2,4n,2n+1)$ and the Petersen
graph are of maximum degree at most four, the following two
corollaries arise immediately.

\begin{cor}\label{bipartite}
A connected bipartite vertex-transitive graph of even order and
other than a cycle is super matched.
\end{cor}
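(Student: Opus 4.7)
The plan is to obtain this corollary as a direct specialization of Theorem \ref{mainresult}: since a connected bipartite vertex-transitive graph of even order is automatically $k$-regular, the theorem applies and the task reduces to verifying that no bipartite non-cycle can belong to any of the six exceptional families.

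I would dispatch the exceptions in turn. The $k$-clique exception is discharged on parity grounds: any clique of size $k \ge 3$ contains a triangle and is therefore impossible in a bipartite graph, while the only other odd value is $k = 1$, which forces $G \cong K_{2}$ with $|G| = 2$ and so violates the threshold $k \le |G|-2$. The cycle exception is removed by hypothesis, since the only bipartite cycles are the even cycles and $G$ is assumed not to be a cycle.

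For the four sporadic families, namely $Z_{4n}(1, 4n-1, 2n)$, $Z_{4n+2}(2, 4n, 2n+1)$, $Z_{4n+2}(1, 4n+1, 2n, 2n+2)$, and the Petersen graph, I would rule out the bipartite case by exhibiting a short odd cycle in each. In $Z_{4n}(1,4n-1,2n)$ and $Z_{4n+2}(1,4n+1,2n,2n+2)$ the path $0, 1, 2, \ldots, 2n$ of length $2n$ is closed into a $(2n+1)$-cycle by the generator $2n$; in $Z_{4n+2}(2,4n,2n+1)$ the generator $2$ alone traces the $(2n+1)$-cycle $0, 2, 4, \ldots, 4n, 0$; and the Petersen graph is classically of girth $5$. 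Each of these graphs therefore contains an odd cycle and cannot be bipartite, so $G$ falls into none of the six exceptions and must be super matched.

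The only genuine labor here is recording the odd cycles above, so the argument is essentially bookkeeping on top of Theorem \ref{mainresult}; there is no real obstacle beyond that.
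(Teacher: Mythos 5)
Your proposal is correct and follows essentially the same route as the paper, which derives the corollary from Theorem \ref{mainresult} by observing that all six exceptional classes except even cycles are non-bipartite (and cycles are excluded by hypothesis). You merely spell out the non-bipartiteness of the sporadic families via explicit odd cycles, which the paper leaves implicit.
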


\begin{cor}\label{minimumdegreefive}
Let $G$ be a $k$-regular connected vertex-transitive graph of even
order and with minimum degree at least five. If it does not contain
a $k$-clique with $k$ odd and $k\leq |G|-2$, then it is super
matched.
\end{cor}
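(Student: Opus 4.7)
The plan is to derive this corollary as an immediate consequence of Theorem \ref{mainresult}. That theorem characterizes the $k$-regular connected vertex-transitive graphs of even order that fail to be super matched as precisely those which either contain a $k$-clique (with $k$ odd and $k\leq |G|-2$) or are isomorphic to one of five exceptional graph families: a cycle of length at least six, $Z_{4n}(1,4n-1,2n)$, $Z_{4n+2}(2,4n,2n+1)$, $Z_{4n+2}(1,4n+1,2n,2n+2)$, or the Petersen graph. So the task reduces to showing that the minimum-degree hypothesis $k\geq 5$ rules out the five exceptional families, leaving the clique condition as the only remaining obstruction.

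The key step is therefore a short bookkeeping check of the regularity of each exceptional family. A cycle is $2$-regular. For the Cayley graph $Z_{4n}(1,4n-1,2n)$, the generator $2n$ is its own inverse modulo $4n$ while $1$ and $4n-1$ are mutual inverses, so the graph is $3$-regular; the same reasoning shows $Z_{4n+2}(2,4n,2n+1)$ is $3$-regular. For $Z_{4n+2}(1,4n+1,2n,2n+2)$, the elements $2n$ and $2n+2$ are \emph{not} self-inverse modulo $4n+2$ (since $4n\not\equiv 0$ and $4n+4\equiv 2\not\equiv 0$), so the generating set $\{1,4n+1,2n,2n+2\}$ consists of two inverse pairs and the graph is $4$-regular. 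Finally, the Petersen graph is $3$-regular. Thus every exceptional family has regularity at most $4$.

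Given the hypothesis $k\geq 5$, the graph $G$ cannot coincide with any graph in these five families. The only remaining condition in Theorem \ref{mainresult} for $G$ to be super matched is the absence of a $k$-clique with $k$ odd and $k\leq |G|-2$, which is precisely the hypothesis of the corollary. Applying Theorem \ref{mainresult} therefore finishes the proof. I do not anticipate any genuine obstacle here; the only point requiring mild care is correctly reading the regularity of the Cayley-graph exceptions, since a self-inverse generator contributes only one edge per vertex rather than two.
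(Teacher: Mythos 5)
Your proposal is correct and matches the paper's own justification: the paper derives this corollary immediately from Theorem \ref{mainresult} by observing that the exceptional families (cycles and the four sporadic classes) all have maximum degree at most four, which is exactly your bookkeeping check. The only difference is that you spell out the regularity computation for the Cayley-graph generators explicitly, which the paper leaves implicit.
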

\section{Conclusion and Applications}

By the above argument, we can see that any connected
vertex-transitive graph of even order is maximally matched.
Moreover, a $k$-regular vertex-transitive graph of even order is
super matched if and only if it doesn't contain cliques of size $k$
when $k$ is odd and $k\leq |G|-2$ or it is not isomorphic to a cycle
of length at least six or $Z_{4n}(1,4n-1,2n)$ or
$Z_{4n+2}(2,4n,2n+1)$ or $Z_{4n+2}(1,4n+1,2n,2n+2)$ or the Petersen
graph. From this, the matching preclusion number and the super
matchability of the following networks with even order can be
obtained: (1) A complete graph or a complete bipartite graph; (2) a
Cayley graph generalized by transpositions or a $(n,k)$-star; (3) An
augmented cube; (4) An $(n,k)$-buddle-sort graph; (5) A tori and
related Cartesian products; (6) A burnt pancake graph; (7) A
balanced hypercube; (7) A recursive circulant $G(2^{m},4)$; (8)
$k$-ary $n$-cubes. Note that these results have been obtained in
\cite{Brigham2005}, \cite{Cheng2007}, \cite{Cheng101},
\cite{Cheng10}, \cite{Cheng20121}, \cite{Hu13}, \cite{lv2012},
\cite{Park2008} and \cite{Wang2010}, respectively and one can easily
check that the results in these papers are consistent with those
obtained by applying our results. Furthermore, we can apply our
results to other particular vertex-transitive networks, such as
folded $k$-cube graphs, Hamming graphs and halved cube graphs. We
just present the precise application for folded $k$-cubes. The
others are similar and omitted.

The folded $k$-cube graph ($k\geq 3$), containing $2^{k-1}$
vertices, denoted by $FQ_{k}$ may be formed by adding edges between
opposite pairs of vertices in a $(k-1)$-hypercube graph.
\begin{thm}
A folded $k$-cube graph is super matched if and only if  $k\geq 4$.
\end{thm}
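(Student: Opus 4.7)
The plan is to apply Theorem \ref{mainresult} directly. I begin by noting that $FQ_k$ can be realized as the Cayley graph on $\mathbb{Z}_2^{k-1}$ with connection set $\{e_1,\ldots,e_{k-1},\mathbf{1}\}$, where $e_i$ is the $i$-th standard basis vector and $\mathbf{1}=(1,1,\ldots,1)$. In particular, $FQ_k$ is a connected, $k$-regular, vertex-transitive graph of even order $2^{k-1}$, so Theorem \ref{mainresult} is applicable and the question reduces to deciding for which $k$ the graph $FQ_k$ avoids all six exceptional families.

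For $k=3$, the graph $FQ_3$ is $3$-regular on $4$ vertices, hence $FQ_3\cong K_4$. But $K_4$ coincides with the Cayley graph $Z_4(1,3,2)$, which is the $n=1$ instance of the family $Z_{4n}(1,4n-1,2n)$; so Theorem \ref{mainresult} forces $FQ_3$ not to be super matched. A concrete non-trivial optimal solution is any triangle, exactly as in item (c) of the proof of Theorem \ref{mainresult}.

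For $k\geq 4$, I would rule out each exceptional family. Since $FQ_k$ is $k$-regular with $k\geq 4$ on $2^{k-1}\geq 8$ vertices, it is not a cycle, and it coincides with none of the Petersen graph, $Z_{4n}(1,4n-1,2n)$, or $Z_{4n+2}(2,4n,2n+1)$, all of which are $3$-regular. The family $Z_{4n+2}(1,4n+1,2n,2n+2)$ has order $4n+2\equiv 2\pmod 4$, whereas $|FQ_k|=2^{k-1}$ is divisible by $4$ for $k\geq 3$, so this case is excluded as well. Finally, I would show that $FQ_k$ is triangle-free for $k\geq 4$, which implies it contains no $k$-clique at all and in particular disposes of the odd-clique condition of Theorem \ref{mainresult}: a triangle through the identity would require two distinct generators whose sum is again a generator, but $e_i+e_j$ has Hamming weight $2$ and $e_i+\mathbf{1}$ has weight $k-2$, neither of which equals the admissible generator weights $1$ or $k-1$ once $k\geq 4$.

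With every exceptional family excluded for $k\geq 4$, Theorem \ref{mainresult} yields super matchability, completing the equivalence. The argument is essentially bookkeeping against Theorem \ref{mainresult}; the only computation of any substance is the one-line Hamming-weight check establishing triangle-freeness, which I expect to be the main (and still minor) obstacle.
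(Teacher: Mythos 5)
Your proof is correct and takes essentially the same route as the paper: both reduce the statement to Theorem \ref{mainresult}, identify $FQ_3\cong K_4\cong Z_4(1,3,2)$ for the failure at $k=3$, and verify that $FQ_k$ avoids all exceptional families for $k\geq 4$. The only cosmetic difference is that the paper dispatches $k=4$ via Corollary \ref{bipartite} (since $FQ_4\cong K_{4,4}$) and $k\geq 5$ via Corollary \ref{minimumdegreefive}, whereas you check the six families uniformly using the Cayley-graph realization and a Hamming-weight triangle-freeness computation.
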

\begin{proof}
Clearly, $FQ_{k}$ is $k$-regular. $FQ_{3}$ is $K_{4}$ and $FQ_{4}$
is the complete bipartite graph $K_{4,4}$. By the
vertex-transitivity of the $k$-hypercube graph, one can easily check
that $FQ_{k}$ is vertex-transitive. If $k=3$, $FQ_{3}=K_{4}$ is
isomorphic to $Z_{4}(1, 3, 2)$, then by Theorem \ref{mainresult},
$FQ_{3}$ is not super matched. If $k=4$, $FQ_{4}$ is a bipartite
graph other than a cycle, then by Corollary \ref{bipartite}, it is
super matched. If $k\geq 5$, then $FQ_{k}$ is of minimum degree at
least five and does not contain a $k$-clique, and by Corollary
\ref{minimumdegreefive}, it is super matched.
\end{proof}

The Hamming graph $H(d,q)$ can be viewed as the Cartesian product of
$d$ complete graphs $K_{q}$.

\begin{thm}
A Hamming graph  $H(d,q)$ with even order is super matched if and
only if $(d,q)\notin \{(1,4), (2,2)\}$.
\end{thm}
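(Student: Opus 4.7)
The plan is to view $H(d,q)$ as a Cayley graph on $\mathbb{Z}_q^d$ with the standard ``direction'' generating set in each factor, so it is automatically connected, vertex-transitive, and $d(q-1)$-regular, with $q^d$ vertices; the even-order hypothesis forces $q$ to be even, and I restrict to that case throughout. The strategy is then to apply Theorem~\ref{mainresult} and determine exactly which parameters $(d,q)$ land in one of its six exceptional classes.

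The key structural input I would establish first is the clique number: $\omega(H(d,q)) = q$, with every maximum clique a ``line'' of $q$ vertices agreeing on $d-1$ fixed coordinates. The proof is a short adjacency argument in the Cartesian product: any two adjacent vertices differ in exactly one coordinate, and three pairwise adjacent vertices are then forced to vary in the same coordinate and to agree on the other $d-1$, so any clique of size at least three lies inside a single $K_q$-line. With this in hand, the exceptions in Theorem~\ref{mainresult} can be ruled out for $(d,q)$ outside the claimed pair: (i) the $k$-clique-with-$k$-odd-and-$k\leq |G|-2$ condition requires a clique of size $k=d(q-1)$; for $d\geq 2$ one has $d(q-1)\geq 2(q-1)>q=\omega(H(d,q))$, so no such clique exists, while for $d=1$ one has $k=q-1=|G|-1>|G|-2$, violating the size bound; (ii) the long-cycle class forces $d(q-1)=2$, whose only even-$q$ solution is $(d,q)=(2,2)$ giving $C_4$, a cycle of length $4$, not $\geq 6$; (iii) the three $Z$-families are $3$- or $4$-regular, and combining $d(q-1)\in\{3,4\}$ with $q$ even pins down $(d,q)=(1,4)$, where $H(1,4)=K_4\cong Z_4(1,3,2)$; (iv) the Petersen graph has $10$ vertices of degree $3$, which is incompatible with any even-$q$ choice of $(d,q)$.

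For $(d,q)\notin\{(1,4),(2,2)\}$ I would then conclude super-matchedness through Theorem~\ref{mainresult}, invoking Corollary~\ref{bipartite} in the hypercube subcase $q=2$, $d\geq 3$ (since $H(d,2)=Q_d$ is bipartite and not a cycle), and Corollary~\ref{minimumdegreefive} whenever $q\geq 4$ and $d\geq 2$ (where the minimum degree $d(q-1)\geq 6$ already exceeds $\omega$). For the two asserted exceptional parameters, $H(1,4)=K_4\cong Z_4(1,3,2)$ is immediately not super matched via Theorem~\ref{mainresult}, while $H(2,2)=C_4$ is dealt with by a short direct verification. The main obstacle I anticipate is the clean verification that $\omega(H(d,q))=q$ from the coordinate-difference characterization of adjacency, together with the careful boundary-case bookkeeping near the $Z$-family and cycle exceptions; once those are settled, the remainder is routine case-checking against Theorem~\ref{mainresult}.
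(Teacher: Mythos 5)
The paper gives no proof of this theorem at all --- it is one of the applications declared ``similar and omitted'' after the folded $k$-cube argument --- and your route (compute the degree $d(q-1)$, the order $q^d$, the clique number $\omega=q$, and test against the six exceptional classes of Theorem~\ref{mainresult}, using Corollary~\ref{bipartite} for $q=2$ and Corollary~\ref{minimumdegreefive} for large degree) is exactly the intended one. Two small slips in your bookkeeping: the inequality $2(q-1)>q$ fails at $q=2$ (there you need instead that $k=d$ is even when $d=2$ and that $d\ge 3>2=\omega$ otherwise), and $d(q-1)\in\{3,4\}$ with $q$ even does not pin down $(1,4)$ alone --- it also admits $(3,2)$ and $(4,2)$, which must be excluded because $Q_3$ and $Q_4$ are bipartite while the three $Z$-families are not (your routing of $q=2$ through Corollary~\ref{bipartite} effectively covers this, but the claim as stated is imprecise).

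The genuine gap is the $(2,2)$ direction. You defer $H(2,2)=C_4$ to ``a short direct verification,'' implicitly expecting to verify it is \emph{not} super matched. That verification goes the other way: $mp(C_4)=2$, the only $2$-edge preclusion sets are the four vertex stars (each pair of opposite edges leaves the complementary perfect matching intact), so every optimal solution of $C_4$ is trivial and $C_4$ \emph{is} super matched under the paper's definition. This is consistent with Theorem~\ref{mainresult} itself, whose proof states explicitly that a cycle of length four is super matched and whose exceptional cycle class starts at length six. So your proof cannot be completed as proposed --- not because your method is wrong, but because the statement's exceptional set appears to be erroneous: on the paper's own definitions it should be $\{(1,4)\}$ rather than $\{(1,4),(2,2)\}$. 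You should either flag this inconsistency or prove the corrected statement.
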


The halved $k$-cube graph or half $k$-cube graph ($k\geq 3$) is the
graph of the demihypercube, formed by connecting pairs of vertices
at distance exactly two from each other in the $k$-hypercube graph.
This connectivity pattern produces two isomorphic graphs,
disconnected from each other, each of which is the halved $k$-cube
graph.

\begin{thm}
A halved $k$-cube graph is super matched if and only if $k\geq 4$.
\end{thm}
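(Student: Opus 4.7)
The plan is to apply Theorem~\ref{mainresult} via its Corollary~\ref{minimumdegreefive}, after recording the structural parameters of the halved $k$-cube. Realize $\tfrac{1}{2}Q_k$ as the graph on even-weight binary strings of length $k$, with two strings adjacent iff their Hamming distance equals $2$. I would first record that $\tfrac{1}{2}Q_k$ has $2^{k-1}$ vertices (even for $k\geq 2$), is connected for $k\geq 3$, is $\binom{k}{2}$-regular, and is vertex-transitive: translations by even-weight vectors preserve both the parity class and the Hamming distance, so they form a subgroup of $\operatorname{Aut}(\tfrac{1}{2}Q_k)$ acting transitively on the vertices.

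For the necessity direction ($k=3$), the graph $\tfrac{1}{2}Q_3$ is $3$-regular on four vertices and hence equals $K_4$, which coincides with the circulant $Z_4(1,3,2)=Z_{4\cdot 1}(1,\,4\cdot 1-1,\,2\cdot 1)$. This is one of the exceptional graphs listed in Theorem~\ref{mainresult} (case (c) in its proof), so $\tfrac{1}{2}Q_3$ is not super matched.

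For the sufficiency direction ($k\geq 4$), I would invoke Corollary~\ref{minimumdegreefive}: the regularity $\binom{k}{2}\geq 6$ meets the minimum-degree hypothesis, so it only remains to verify that $\tfrac{1}{2}Q_k$ contains no $\binom{k}{2}$-clique. By vertex-transitivity it suffices to bound the cliques through the zero vector $\mathbf{0}$; its neighbours are precisely the weight-$2$ vectors $e_i+e_j$, and two such vectors are adjacent in $\tfrac{1}{2}Q_k$ iff the corresponding $2$-subsets $\{i,j\},\{i',j'\}\subseteq[k]$ meet in exactly one element. A short combinatorial check shows that any family of $2$-subsets of $[k]$ with pairwise intersection of size one is either a sunflower $\{\{a,b\}:b\in[k]\setminus\{a\}\}$ (at most $k-1$ members) or a subfamily of a triangle $\{\{a,b\},\{b,c\},\{a,c\}\}$ (at most three members). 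Hence the clique number of $\tfrac{1}{2}Q_k$ is at most $\max(k,4)$, and since $\binom{k}{2}>k$ for all $k\geq 4$, there is no clique of size $\binom{k}{2}$; Corollary~\ref{minimumdegreefive} then yields super matchedness.

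The only non-routine step is the clique-number bound, and it reduces to the two-case inspection of pairwise one-intersecting $2$-subsets sketched above; everything else is bookkeeping against Theorem~\ref{mainresult}, in the same pattern used for the folded $k$-cube.
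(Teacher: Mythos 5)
Your proposal is correct and follows exactly the route the paper intends: the paper omits this proof, stating only that it is ``similar'' to the folded $k$-cube argument, and your write-up is precisely that argument (identify $\tfrac12 Q_3\cong K_4\cong Z_4(1,3,2)$ as exceptional case (c), then apply Corollary~\ref{minimumdegreefive} for $k\ge 4$). The only substantive detail you add beyond the paper's template is the explicit clique-number bound $\max(k,4)<\binom{k}{2}$ via the star/triangle dichotomy for pairwise one-intersecting $2$-subsets, which is correct and fills in the verification the paper leaves implicit.
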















\end{document}